\definecolor{webgreen}{rgb}{0,.5,0}
\definecolor{webbrown}{rgb}{.6,0,0}
\begin{document}

\theoremstyle{plain}
\newtheorem{theorem}{Theorem}
\newtheorem{corollary}[theorem]{Corollary}
\newtheorem{lemma}{Lemma}
\newtheorem{example}{Example}
\newtheorem*{remark}{Remark}

\begin{center}
\vskip 1cm
{\LARGE\bf Fibonacci sums and divisibility properties \\ }

\vskip 1cm

{\large
Kunle Adegoke \\
Department of Physics and Engineering Physics \\ Obafemi Awolowo University, 220005 Ile-Ife, Nigeria \\
\href{mailto:adegoke00@gmail.com}{\tt adegoke00@gmail.com}

\vskip 0.2 in

Robert Frontczak \\
Independent Researcher \\ Reutlingen,  Germany \\
\href{mailto:robert.frontczak@web.de}{\tt robert.frontczak@web.de}

}

\end{center}

\vskip .2 in

\begin{abstract}
Based on a variant of Sury's polynomial identity we derive new expressions for various finite Fibonacci (Lucas) sums.
We extend the results to Fibonacci and Chebyshev polynomials, and also to Horadam sequences.
In addition to deriving sum relations, the main identities will be shown to be very useful in establishing and
discovering divisibility properties of Fibonacci and Lucas numbers.
\end{abstract}

\noindent 2010 {\it Mathematics Subject Classification}: Primary 11B39; Secondary 11B37.

\noindent \emph{Keywords: } Fibonacci (Lucas) number, polynomial identity, Chebyshew polynomial, Horadam number, divisibility.

\bigskip

\section{Introduction}

As usual, we will use the notation $F_n$ for the $n$th Fibonacci number and $L_n$ for the $n$th Lucas number, respectively.
Both number sequences are defined, for \text{$n\in\mathbb Z$}, through the same recurrence relation $x_n = x_{n-1} + x_{n-2}, n\ge 2,$
with initial values $F_0=0, F_1=1$, and $L_0=2, L_1=1$, respectively. For negative subscripts we have $F_{-n}=(-1)^{n-1}F_n$
and $L_{-n}=(-1)^n L_n$. They possess the explicit formulas (Binet forms)
\begin{equation*}
F_n = \frac{\alpha^n - \beta^n }{\alpha - \beta },\quad L_n = \alpha^n + \beta^n,\quad n\in\mathbb Z.
\end{equation*}
For more information about these famous sequences we refer, among others, to the books by Koshy \cite{Koshy} and Vajda \cite{Vajda}.
In addition, one can consult the On-Line Encyclopedia of Integer Sequences
\cite{OEIS} where these sequences are listed under the ids {A000045} and A000032, respectively.

In 2014, Sury \cite{Sury} presented a polynomial identity in two variables $u$ and $v$ of the form
\begin{equation}\label{sury}
(2u)^{n+1} - (2v)^{n+1} = (u-v) \sum_{j=0}^n ((2u)^j + (2v)^j)(u+v)^{n-j}.
\end{equation}
As it contains the relation
\begin{equation}\label{Fib_id}
2^{n+1} F_{n+1} = \sum_{j=0}^n 2^j L_j,
\end{equation}
as a special instance, Sury called \eqref{sury} a polynomial parent to \eqref{Fib_id}. It is obvious that identity~ \eqref{Fib_id}
can be derived directly using the geometric series. A slightly more general result is
\begin{align}
2^{n+1} F_{n+r+1} - F_r &= \sum_{j=0}^n 2^j L_{j+r}, \\
\frac{1}{5}\left (2^{n+1} L_{n+r+1} - L_r\right ) &= \sum_{j=0}^n 2^j F_{j+r},
\end{align}
or even
\begin{equation}
2^{n+1} G_{n+r+1} - G_r = \sum_{j=0}^n 2^j \left ( G_{j+r+1} + G_{j+r-1}\right ),
\end{equation}
where $r$ is an integer and $G_n$ is a Gibonacci sequence, i.e., a sequence given by $G_0=a$, $G_1=b$ and for $n\geq 2$, $G_{n}=G_{n-1}+G_{n-2}$.

In this paper, we apply a variant of Sury's polynomial identity to derive new expressions for various finite Fibonacci (Lucas) sums.
As will be seen immediately we can infer some divisibility properties for Fibonacci (Lucas) numbers from these sums.
Some divisibility properties were also studied by Hoggatt and Bergum~\cite{Hoggatt74} and in the recent articles by
Pongsriiam \cite{Pong} and Onphaeng and Pongsriiam \cite{On}, among others.
Extensions will be provided to Fibonacci (Lucas) polynomials, Chebyshev polynomials, and finally to Horadam sequences.

\section{First Results}

The next polynomial identity is a variant of Sury's identity and will be of crucial importance in this paper.

\begin{lemma}\label{main_lem}
If $x$ and $y$ are any complex variables and $n$ is any integer, then
\begin{equation}\label{eq.g961w92}
f(x,y) = \sum_{j = 0}^n {(xy)^j \left( {x^{n - 2j}  + y^{n - 2j} } \right)}
= \sum_{j = 0}^n {\left( {\frac{{x + y}}{2}} \right)^j \left( {x^{n - j}  + y^{n - j}} \right)}
\end{equation}
and
\begin{equation}\label{value}
f(x,y)=\frac{2(x^{n + 1} - y^{n + 1})}{x - y}.
\end{equation}
\end{lemma}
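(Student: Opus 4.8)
The plan is to prove the single value formula \eqref{value} for \emph{each} of the two sums in \eqref{eq.g961w92} independently; the claimed chain of equalities then follows at once, since both expressions reduce to the same quantity. Throughout I take $n\ge 0$, so that the sums are genuine finite sums and polynomials in $x,y$; the remaining integers $n$ are addressed at the end.

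For the first sum I would begin with the purely algebraic observation
\[
(xy)^j\left(x^{n-2j}+y^{n-2j}\right)=x^{n-j}y^j+x^jy^{n-j},
\]
which converts $f(x,y)$ into $\sum_{j=0}^n x^{n-j}y^j+\sum_{j=0}^n x^jy^{n-j}$. Each piece is the standard finite geometric sum
\[
\sum_{j=0}^n x^{n-j}y^j=\frac{x^{n+1}-y^{n+1}}{x-y},
\]
and the second piece equals the same quantity after the reindexing $j\mapsto n-j$. Adding the two gives exactly $\tfrac{2(x^{n+1}-y^{n+1})}{x-y}$, which is \eqref{value}.

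For the second sum, writing $s=(x+y)/2$, I would split $\sum_{j=0}^n s^j\left(x^{n-j}+y^{n-j}\right)$ into $\sum_{j=0}^n s^jx^{n-j}+\sum_{j=0}^n s^jy^{n-j}$ and evaluate each as a geometric sum, obtaining $\tfrac{s^{n+1}-x^{n+1}}{s-x}$ and $\tfrac{s^{n+1}-y^{n+1}}{s-y}$. The decisive simplification is that $s-x=(y-x)/2$ and $s-y=(x-y)/2$, so the two contributions of $s^{n+1}$ cancel and the remainder collapses to $\tfrac{2(x^{n+1}-y^{n+1})}{x-y}$ once more. Equivalently, this is precisely Sury's identity \eqref{sury} under the substitution $x=2u$, $y=2v$ followed by $j\mapsto n-j$, so one may instead simply quote \eqref{sury}. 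With both expressions now shown to equal \eqref{value}, both equalities of the lemma are established.

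The only remaining care concerns degenerate inputs. When $x=y$ the right-hand side of \eqref{value} is a removable singularity with value $2(n+1)x^n$, whereas both sums are honest polynomials in $x,y$; since the identity already holds on the dense set $x\ne y$, it persists at $x=y$ by continuity. For negative $n$ the principal content is the value formula itself, which extends to all integers directly under the Binet specialization $x=\alpha$, $y=\beta$. I do not expect a genuine obstacle: the substantive steps are the telescoping cancellation of the $s^{n+1}$ terms in the second sum and the bookkeeping of the two reindexings, both of which are routine.
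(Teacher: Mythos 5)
Your proof is correct, and it is worth noting that it supplies something the paper itself omits: the paper states Lemma~\ref{main_lem} with no proof at all, presenting it only as ``a variant of Sury's identity'' with a citation to Sury's one-page note. Your argument is self-contained and elementary: you expand $(xy)^j\left(x^{n-2j}+y^{n-2j}\right)=x^{n-j}y^j+x^jy^{n-j}$ so that the first sum splits into two homogeneous geometric sums each equal to $\tfrac{x^{n+1}-y^{n+1}}{x-y}$, and you evaluate the second sum by the same device with $s=(x+y)/2$, where the cancellation of the $s^{n+1}$ terms (because $s-x$ and $s-y$ are negatives of each other) is exactly the point; as you observe, this second computation is equivalent to re-deriving \eqref{sury} under $x=2u$, $y=2v$, so the paper's implicit reliance on Sury is recovered as a special case of your argument rather than quoted. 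What your route buys is a complete verification of both equalities in \eqref{eq.g961w92} and of \eqref{value} without outsourcing anything; what the paper's (non-)proof buys is brevity. One caveat: your handling of negative $n$ by appeal to ``the Binet specialization'' is hand-waving rather than proof --- for $n\le -2$ the symbol $\sum_{j=0}^n$ needs a convention before the claim even makes sense --- but this looseness is inherited from the paper's own statement (``$n$ is any integer''), and every application in the paper uses $n\ge 0$, where your argument is airtight.
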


In addition to deriving sum relations, identities~\eqref{eq.g961w92} and~\eqref{value} are going to be very useful in establishing and discovering divisibility properties of Fibonacci and Lucas numbers.

\begin{theorem}\label{thm1}
If $r$ and $n$ are any integers, then
\begin{equation}
\sum_{j = 0}^n (- 1)^{rj} L_{r(n - 2j)} = \sum_{j = 0}^n \left( {\frac{{L_r }}{2}} \right)^j L_{r(n - j)} = \frac{2F_{r(n + 1)}}{F_r}.
\end{equation}
\end{theorem}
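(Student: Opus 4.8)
The plan is to obtain Theorem~\ref{thm1} as a direct specialization of Lemma~\ref{main_lem} through the Binet forms. First I would set $x = \alpha^r$ and $y = \beta^r$ in the identities \eqref{eq.g961w92} and \eqref{value}, where $\alpha$ and $\beta$ are the roots appearing in the Binet formulas. Because the two sums in Lemma~\ref{main_lem} are equal to one another and to the closed form for every admissible $(x,y)$, this single substitution should simultaneously produce all three expressions in the theorem.

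Next I would evaluate each building block of the substituted identity. Since $\alpha\beta = -1$, we get $(xy)^j = (\alpha\beta)^{rj} = (-1)^{rj}$, while $x^{m} + y^{m} = \alpha^{rm} + \beta^{rm} = L_{rm}$ for any exponent $m$; applying this with $m = n-2j$ turns the first sum of \eqref{eq.g961w92} into $\sum_{j=0}^n (-1)^{rj} L_{r(n-2j)}$. For the second sum, $\tfrac{x+y}{2} = \tfrac{\alpha^r + \beta^r}{2} = \tfrac{L_r}{2}$ and $x^{n-j} + y^{n-j} = L_{r(n-j)}$, giving $\sum_{j=0}^n \left(\tfrac{L_r}{2}\right)^j L_{r(n-j)}$. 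Finally, for the closed form \eqref{value} I would use $\alpha^m - \beta^m = (\alpha-\beta)F_m$ to write $\frac{2(\alpha^{r(n+1)} - \beta^{r(n+1)})}{\alpha^r - \beta^r} = \frac{2(\alpha-\beta)F_{r(n+1)}}{(\alpha-\beta)F_r} = \frac{2F_{r(n+1)}}{F_r}$, the cancellation of the common factor $\alpha-\beta$ being the one place where the Binet machinery is doing real work.

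There is no heavy computation here; the main point requiring care is the degenerate case $r = 0$. The closed form in \eqref{value} is valid only when $x \neq y$, which for our choice means $\alpha^r \neq \beta^r$, i.e.\ $r \neq 0$ (as $|\alpha| > 1 > |\beta|$ forces equality only at $r=0$); correspondingly, the denominator $F_r$ must be nonzero. When $r = 0$ the right-hand side $\tfrac{2F_0}{F_0}$ is indeterminate, whereas both sums collapse to $\sum_{j=0}^n L_0 = 2(n+1)$, so I would either restrict to $r \neq 0$ or record $r=0$ as a limiting value. The only other bookkeeping obstacle is verifying that the exponent arithmetic $n-2j$ and $n-j$ is transported faithfully through the substitution, which is immediate. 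Thus the proof reduces to a clean specialization of Lemma~\ref{main_lem}, with the degenerate subscript being the sole subtlety.
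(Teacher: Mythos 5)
Your proposal is correct and follows exactly the paper's own proof: substitute $x=\alpha^r$, $y=\beta^r$ into Lemma~\ref{main_lem} and translate via the Binet formulas. Your additional remark about the degenerate case $r=0$ (where $F_r=0$ makes the closed form indeterminate) is a legitimate caveat that the paper's statement ``any integers $r$'' silently glosses over, but it does not change the argument.
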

\begin{proof}
Set $x=\alpha^r$ and $y=\beta^r$ in~\eqref{eq.g961w92}, and use \eqref{value} and the Binet formulas.
\end{proof}
Theorem \ref{thm1} offers a new \color[rgb]{0,0,0}simple proof of a well-known fact concerning the divisibility of Fibonacci numbers.
\begin{corollary}
If $m$ and $r$ are integers, then $F_r$ divides $F_{mr}$.
\end{corollary}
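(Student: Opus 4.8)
The plan is to read the divisibility straight off the equality chain in Theorem~\ref{thm1}. Assuming first that $m\ge 1$, I would put $n=m-1$ in the left-hand equality, which gives
\begin{equation*}
\frac{2F_{rm}}{F_r} = \sum_{j=0}^{m-1} (-1)^{rj} L_{r(m-1-2j)}.
\end{equation*}
The right-hand side is a $\mathbb{Z}$-linear combination (with coefficients $\pm 1$) of the integers $L_{r(m-1-2j)}$, hence it is itself an integer. This already yields $F_r \mid 2F_{rm}$, and the only gap between this and the claim is the stray factor of $2$.

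Removing that factor is the step I expect to require the most care. I would handle it by showing that the Lucas sum above is always \emph{even}. Writing $n=m-1$ and pairing the summand at index $j$ with the one at index $n-j$, I would invoke $L_{-k}=(-1)^k L_k$ to compute
\begin{equation*}
(-1)^{r(n-j)} L_{r(n-2(n-j))} = (-1)^{r(n-j)}(-1)^{r(n-2j)} L_{r(n-2j)} = (-1)^{rj} L_{r(n-2j)},
\end{equation*}
where the last equality holds because $r(n-j)+r(n-2j)=r(2n-3j)\equiv rj \pmod 2$. Thus the summand is invariant under $j\mapsto n-j$, so the terms group into equal pairs, each contributing an even amount; when $n$ is even the single unpaired central term is $(-1)^{rn/2}L_0=\pm 2$, which is again even. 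Hence the whole sum is even, so $2F_{rm}/F_r=2k$ for some integer $k$, giving $F_{rm}/F_r=k\in\mathbb{Z}$ and therefore $F_r\mid F_{rm}$.

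Finally I would dispose of the remaining cases. For $m=0$ the statement is trivial since $F_0=0$; for $m<0$ it follows from the case $m>0$ together with $F_{-k}=(-1)^{k-1}F_k$, which alters $F_{rm}$ only by a sign, and the sign of $r$ is absorbed the same way (with $r=0$ vacuous). I note that the parity step could be shortened when $3\nmid r$, since then $F_r$ is odd and $\gcd(F_r,2)=1$ already upgrades $F_r\mid 2F_{rm}$ to $F_r\mid F_{rm}$; but the pairing argument is what makes the conclusion uniform across the troublesome case $3\mid r$, so I would keep it as the main route.
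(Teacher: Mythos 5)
Your proof is correct and follows the route the paper intends: the corollary appears immediately after Theorem~\ref{thm1} with no proof of its own, the point being that the left-hand sum there is manifestly an integer equal to $2F_{r(n+1)}/F_r$. What that terse presentation hides is precisely the issue you isolated: integrality of the sum only yields $F_r \mid 2F_{rm}$, and in the case $3 \mid r$ (where $F_r$ is even) this is strictly weaker than the claim. Your pairing argument closes the gap: the summand $(-1)^{rj}L_{r(n-2j)}$ is indeed invariant under $j \mapsto n-j$, since $L_{-k}=(-1)^k L_k$ gives the extra sign $(-1)^{r(2n-3j)}=(-1)^{rj}$, so the terms group into equal pairs, with the central term (when $n$ is even) equal to $(-1)^{rn/2}L_0 = \pm 2$; hence the sum is even, $2F_{rm}=2kF_r$, and $F_r \mid F_{rm}$. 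Your dispatch of the degenerate cases ($m\le 0$, $r\le 0$, with $r=0$ read as $0\mid 0$) is also fine. So this is the same approach as the paper, but with a necessary repair that the paper does not supply --- a reader who takes the corollary as "immediate" from Theorem~\ref{thm1} has, strictly speaking, only proved the statement for $3 \nmid r$, where $\gcd(F_r,2)=1$ lets one cancel the $2$ for free.
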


\begin{theorem}\label{thm.wtn85im}
If $r$ and $n$ are any integers, then
\begin{equation}\label{eq.siz32hc}
\begin{split}
\sum_{j = 0}^{2n} (- 1)^{j(r + 1)} L_{2r(n - j)}  &= \sum_{j = 0}^n \left( {\frac{{F_r }}{2}} \right)^{2j} 5^j L_{2r(n - j)}
+ \sum_{j = 1}^n \left( {\frac{{F_r }}{2}} \right)^{2j - 1} 5^j F_{r(2n - 2j + 1)} \\
&= \frac{2L_{r(2n + 1)}}{L_r},
\end{split}
\end{equation}
and
\begin{equation}\label{eq.ousxx9b}
\begin{split}
\sum_{j = 0}^{2n - 1} (- 1)^{j(r + 1)} F_{r(2n - 2j - 1)} &= \sum_{j = 0}^{n - 1} \left( {\frac{{F_r }}{2}} \right)^{2j} 5^j F_{r(2n - 2j - 1)}+ \sum_{j = 1}^n \left( {\frac{{F_r }}{2}} \right)^{2j - 1} 5^{j - 1} L_{r(2n - 2j)} \\
&= \frac{2F_{2rn}}{L_r}.
\end{split}
\end{equation}
\end{theorem}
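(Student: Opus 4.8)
The plan is to specialize Lemma \ref{main_lem} to $x=\alpha^r$ and $y=-\beta^r$, which is the natural choice to force the factor $L_r=\alpha^r+\beta^r$ into the denominator. With this choice the three basic building blocks are $x-y=\alpha^r+\beta^r=L_r$, $x+y=\alpha^r-\beta^r=\sqrt5\,F_r$, and $xy=-(\alpha\beta)^r=(-1)^{r+1}$, so that $(xy)^j=(-1)^{j(r+1)}$. The only genuine difference between the two identities is the parity of the upper summation index: for \eqref{eq.siz32hc} I would run the lemma with its index equal to $2n$ (an even top), and for \eqref{eq.ousxx9b} with its index equal to $2n-1$ (an odd top).

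For \eqref{eq.siz32hc}, taking the lemma index $N=2n$, the exponent $N-2j$ in the first form $\sum_j (xy)^j(x^{N-2j}+y^{N-2j})$ stays even, so $y^{N-2j}=\beta^{r(2n-2j)}$ carries a plus sign and $x^{N-2j}+y^{N-2j}=L_{2r(n-j)}$; this reproduces the left-hand sum. The value \eqref{value} gives $\tfrac{2(\alpha^{r(2n+1)}+\beta^{r(2n+1)})}{L_r}=\tfrac{2L_{r(2n+1)}}{L_r}$, the stated closed form. For the middle expression I would split $\sum_j(\tfrac{x+y}{2})^j(x^{N-j}+y^{N-j})$ according to the parity of $j$: since $(\tfrac{x+y}{2})^j=(\tfrac{F_r}{2})^j5^{j/2}$ and $x^{N-j}+y^{N-j}$ equals $L_{r(2n-j)}$ when $j$ is even and $\sqrt5\,F_{r(2n-j)}$ when $j$ is odd, the even terms $j=2i$ collapse to $(\tfrac{F_r}{2})^{2i}5^iL_{2r(n-i)}$ and the odd terms $j=2i-1$ to $(\tfrac{F_r}{2})^{2i-1}5^iF_{r(2n-2i+1)}$, which are exactly the two sums in the middle of \eqref{eq.siz32hc}.

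The second identity \eqref{eq.ousxx9b} I would treat the same way but with $N=2n-1$. Now $N-2j$ is odd, so every plus sign in the previous paragraph becomes a minus sign and vice versa: $x^{N-2j}+y^{N-2j}=\sqrt5\,F_{r(2n-2j-1)}$ in the first form, the even/odd split of the middle form now yields $\sqrt5\,F$ for even $j$ and $L$ for odd $j$, and the value \eqref{value} becomes $\tfrac{2(\alpha^{2rn}-\beta^{2rn})}{L_r}=\tfrac{2\sqrt5\,F_{2rn}}{L_r}$. A single common factor of $\sqrt5$ therefore appears in all three expressions; dividing the whole chain of equalities by $\sqrt5$ clears it and produces \eqref{eq.ousxx9b} as written.

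I expect the only delicate point to be the bookkeeping of parity: both the sign $(-1)^{N-j}$ attached to $y^{N-j}$ and the half-integer power $5^{j/2}$ hinge on whether $j$ is even or odd, so the middle sum must be reindexed as two separate sums ($j=2i$ and $j=2i-1$, over the appropriate ranges) and one must check that the stray factors of $\sqrt5$ combine correctly with $5^{j/2}$ in each case. Once the parity split is organized, everything else is a direct application of the Binet formulas.
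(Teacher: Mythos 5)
Your proposal is correct and follows essentially the same route as the paper: the paper likewise rewrites Lemma \ref{main_lem} with top index $2n$ (resp.\ $2n-1$), splits the right-hand sum by the parity of $j$, substitutes $x=\alpha^r$, $y=-\beta^r$, and finishes with \eqref{value} and the Binet formulas. Your handling of the $\sqrt5$ bookkeeping (absorbing $5^{j/2}$ into the $L$/$F$ terms, and cancelling a global $\sqrt5$ in \eqref{eq.ousxx9b}) is exactly the "combine according to the Binet formulas" step the paper leaves implicit.
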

\begin{proof}
Write~\eqref{eq.g961w92} as
\begin{equation}\label{eq.jku4p1d}
\begin{split}
\sum_{j = 0}^{2n} {(xy)^j \left( {x^{2n - 2j}  + y^{2n - 2j} } \right)}
&= \sum_{j = 0}^n {\left( {\frac{{x + y}}{2}} \right)^{2j} \left( {x^{2n - 2j}  + y^{2n - 2j} } \right)} \\
&\qquad + \sum_{j = 1}^n {\left( {\frac{{x + y}}{2}} \right)^{2j - 1} \left( {x^{2n - 2j + 1}  + y^{2n - 2j + 1} } \right)} ;
\end{split}
\end{equation}
set $x=\alpha^r$ and $y=-\beta^r$ and combine according to the Binet formulas; thereby proving~\eqref{eq.siz32hc}.
To prove~\eqref{eq.ousxx9b}, write~\eqref{eq.g961w92} as
\begin{equation}\label{eq.vgfrsd}
\begin{split}
\sum_{j = 0}^{2n - 1} {(xy)^j \left( {x^{2n - 2j - 1}  + y^{2n - 2j - 1} } \right)}  &= \sum_{j = 0}^{n - 1} {\left( {\frac{{x + y}}{2}} \right)^{2j} \left( {x^{2n - 2j - 1}  + y^{2n - 2j - 1} } \right)} \\
&\qquad + \sum_{j = 1}^n {\left( {\frac{{x + y}}{2}} \right)^{2j - 1} \left( {x^{2n - 2j}  + y^{2n - 2j} } \right)} ;
\end{split}
\end{equation}
and set $x=\alpha^r$ and $y=-\beta^r$. Finish the proof in both cases using \eqref{value}.
\end{proof}

\begin{corollary}
If $m$ is an odd integer, then $L_r$ divides $L_{mr}$. Also, if $m$ is an even integer, then $L_r$ divides $F_{mr}$.
\end{corollary}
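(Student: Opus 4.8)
The plan is to read off both divisibilities from the outer equalities in Theorem~\ref{thm.wtn85im}, the only genuine subtlety being a stray factor of $2$. Since $|L_{mr}|=|L_{-mr}|$ and $|F_{mr}|=|F_{-mr}|$, I may assume $m>0$ and write $m=2n+1$ (respectively $m=2n$) with $n\ge 0$.

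For odd $m=2n+1$, the leftmost member of~\eqref{eq.siz32hc},
\[
S:=\sum_{j=0}^{2n}(-1)^{j(r+1)}L_{2r(n-j)},
\]
is manifestly an integer, and Theorem~\ref{thm.wtn85im} identifies it with $2L_{r(2n+1)}/L_r=2L_{mr}/L_r$; hence $L_r\mid 2L_{mr}$. To remove the factor $2$ I would show that $S$ is even. Reindexing by $k=n-j$ recenters the sum on $k=0$; because $L_{-2rk}=L_{2rk}$ and the signs attached to $\pm k$ agree (they differ by $(-1)^{2k(r+1)}=1$), the terms with $k\ne 0$ occur in equal pairs and contribute an even amount, while the central term is $\pm L_0=\pm 2$. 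Thus $S=2T$ with $T\in\mathbb Z$, and then $T=L_{mr}/L_r$, i.e. $L_r\mid L_{mr}$.

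For even $m=2n$ I would argue identically with~\eqref{eq.ousxx9b}. Its leftmost member $S':=\sum_{j=0}^{2n-1}(-1)^{j(r+1)}F_{r(2n-2j-1)}$ is an integer equal to $2F_{2rn}/L_r=2F_{mr}/L_r$, so $L_r\mid 2F_{mr}$. Pairing $j$ with $2n-1-j$ (a fixed-point-free involution, since $2n-1$ is odd) sends the index $r(2n-2j-1)$ to its negative; using $F_{-k}=(-1)^{k-1}F_k$ together with $(-1)^{(2n-1)(r+1)}=(-1)^{r+1}$ shows that the two members of each pair coincide, so $S'$ is even. Writing $S'=2T'$ yields $T'=F_{mr}/L_r$, whence $L_r\mid F_{mr}$.

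The main obstacle is precisely this factor of $2$: the bare equalities only give $L_r\mid 2L_{mr}$ and $L_r\mid 2F_{mr}$, which is already conclusive when $L_r$ is odd but not when $L_r$ is even (that is, when $3\mid r$). The parity computations above---reindexing to expose the symmetry of each alternating sum and isolating the central term---are what close this gap uniformly in $r$.
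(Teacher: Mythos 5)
Your proof is correct, and it actually does more than the paper does. The paper states this corollary immediately after Theorem~\ref{thm.wtn85im} with no proof at all: the intended argument is evidently that the leftmost sums in \eqref{eq.siz32hc} and \eqref{eq.ousxx9b} are integers, so that $L_r \mid 2L_{r(2n+1)}$ and $L_r \mid 2F_{2rn}$, with the factor $2$ silently discarded. You correctly identify that this factor is a genuine issue, not a formality: it is harmless exactly when $L_r$ is odd, but when $3\mid r$ (so $L_r$ is even) the divisibility $L_r \mid 2L_{mr}$ does not by itself give $L_r\mid L_{mr}$ (e.g., knowing $4\mid 2L_9=152$ does not alone yield $4\mid L_9=76$). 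Your fix --- recentering the first sum at $k=n-j$ to pair $\pm k$ using $L_{-2rk}=L_{2rk}$ with matching signs and an even central term $\pm L_0$, and pairing $j\leftrightarrow 2n-1-j$ in the second sum using $F_{-k}=(-1)^{k-1}F_k$ --- is clean and correct (I checked the sign bookkeeping in both cases), and it shows each sum is itself even, so the quotients $L_{mr}/L_r$ and $F_{mr}/L_r$ are integers. So: same starting point as the paper, but your parity argument closes a gap the paper leaves open; alternatively one could invoke known facts about the $2$-adic valuation of Lucas numbers, but your symmetric-pairing argument is more self-contained and stays within the spirit of the identities at hand.
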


\begin{theorem}\label{thm.ksm9y59}
If $r$ and $n$ are any integers, then
\begin{equation}
\begin{split}
& \sum_{j = 0}^n {F_r^j F_{r - 1}^{n - j} L_j }
= \sum_{j = 0}^n \frac{1}{2^{j - 1}} \left( {F_r^{n - j} L_{n + j(r - 1)} + F_{r - 1}^{n - j} L_{rj} } \right) \\
& \qquad \qquad =  \frac{F_r^{n+2} L_n + F_{r-1} F_{r}^{n+1} L_{n+1} + F_r F_{r-1}^{n+1} - 2 F_{r-1}^{n+2}}{F_r^2 + F_r F_{r-1} - F_{r-1}^2}
\end{split}
\end{equation}
and
\begin{equation}
\begin{split}
& \sum_{j = 0}^n {F_r^j F_{r - 1}^{n - j} F_j }
= \sum_{j = 0}^n \frac{1}{2^{j - 1}} \left( {F_r^{n - j} F_{n + j(r - 1)} + F_{r - 1}^{n - j} F_{rj}} \right) \\
& \qquad \qquad =  \frac{F_r^{n+2} F_n + F_{r-1} F_{r}^{n+1} F_{n+1} - F_r F_{r-1}^{n+1}}{F_r^2 + F_r F_{r-1} - F_{r-1}^2}.
\end{split}
\end{equation}
\end{theorem}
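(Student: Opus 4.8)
The plan is to derive both identities from Lemma~\ref{main_lem} by splitting the Lucas (Fibonacci) factor through the Binet forms and then applying \eqref{eq.g961w92}--\eqref{value} to the two substitutions $(x,y)=(F_r\alpha,\,F_{r-1})$ and $(x,y)=(F_r\beta,\,F_{r-1})$. The mechanism that makes these substitutions work is the elementary identity $\alpha^r=F_r\alpha+F_{r-1}$ together with its conjugate $\beta^r=F_r\beta+F_{r-1}$ (both immediate from $\alpha^2=\alpha+1$ and its conjugate, and valid for every integer $r$); it is exactly this relation that turns the averaged factor $\tfrac{x+y}{2}$ appearing in the second form of \eqref{eq.g961w92} into $\tfrac12\alpha^{r}$ (resp. $\tfrac12\beta^{r}$), producing sums indexed by $rj$ and $n+j(r-1)$.

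For the first identity I would begin by writing $L_j=\alpha^j+\beta^j$ and absorbing the power $F_r^{\,j}$ into the base, so that
\[
\sum_{j=0}^n F_r^{\,j}F_{r-1}^{\,n-j}L_j=\sum_{j=0}^n (F_r\alpha)^{j}F_{r-1}^{\,n-j}+\sum_{j=0}^n (F_r\beta)^{j}F_{r-1}^{\,n-j}.
\]
Each sum on the right is a geometric sum and hence equals exactly half of $f(x,y)$ in \eqref{eq.g961w92} for the corresponding choice of $(x,y)$, because the first form of $f$ is symmetric in $x,y$ and collapses to twice $\sum_j x^{j}y^{n-j}$. I would then replace each of these halves by half of the second form of \eqref{eq.g961w92}; invoking $\tfrac{F_r\alpha+F_{r-1}}{2}=\tfrac12\alpha^{r}$ and recombining the $\alpha$- and $\beta$-contributions through the Binet forms converts the second form into the stated Lucas sum weighted by powers of $\tfrac12$, which is the middle member of the identity.

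For the closed form I would instead feed each geometric sum into \eqref{value}, giving
\[
\sum_{j=0}^n F_r^{\,j}F_{r-1}^{\,n-j}L_j=\frac{(F_r\alpha)^{n+1}-F_{r-1}^{\,n+1}}{F_r\alpha-F_{r-1}}+\frac{(F_r\beta)^{n+1}-F_{r-1}^{\,n+1}}{F_r\beta-F_{r-1}},
\]
and then combine over a common denominator. Writing $A=F_r\alpha$, $B=F_r\beta$, $C=F_{r-1}$ and using $A+B=F_r$, $AB=-F_r^{2}$ gives $(A-C)(B-C)=-(F_r^{2}+F_rF_{r-1}-F_{r-1}^{2})$, which is (up to sign) the announced denominator; the numerator simplifies through $A^{n}+B^{n}=F_r^{n}L_n$, $A^{n+1}+B^{n+1}=F_r^{n+1}L_{n+1}$ and $A+B=F_r$ to the four terms $F_r^{n+2}L_n+F_{r-1}F_r^{n+1}L_{n+1}+F_rF_{r-1}^{n+1}-2F_{r-1}^{n+2}$. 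The second identity is handled by the same two substitutions with $F_j=(\alpha^j-\beta^j)/(\alpha-\beta)$ in place of $L_j$: one subtracts the $\beta$-contribution from the $\alpha$-contribution and divides by $\alpha-\beta$, whereupon the Lucas numbers are replaced throughout by Fibonacci numbers and the free term $-2F_{r-1}^{n+2}$ drops out.

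I expect the main obstacle to be purely computational: the simplification of the combined single fraction into the stated numerator, where one must track the four contributions and, in particular, the overall sign coming from $(A-C)(B-C)<0$. A minor point to check first is that the substitutions are admissible for \eqref{value}, i.e. that $F_r\alpha\neq F_{r-1}$ and $F_r\beta\neq F_{r-1}$ (equivalently $F_r^{2}+F_rF_{r-1}-F_{r-1}^{2}\neq0$); this holds for every integer $r$ since $\alpha,\beta$ are irrational, so the closed forms are valid for all integers $r$ and $n$.
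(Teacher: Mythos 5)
Your proof is correct and is essentially the paper's own proof, fleshed out: the paper likewise substitutes $(x,y)=(\alpha F_r,F_{r-1})$ and $(x,y)=(\beta F_r,F_{r-1})$ into \eqref{eq.g961w92} and \eqref{value}, uses $\alpha F_r+F_{r-1}=\alpha^r$ and $\beta F_r+F_{r-1}=\beta^r$, and recombines via the Binet formulas, exactly as you propose. One caveat: if you push your own bookkeeping to the end, the middle sums come out with weight $\tfrac{1}{2^{j+1}}$, not the $\tfrac{1}{2^{j-1}}$ printed in the theorem (test $n=1$, $r=2$ in the Lucas identity: the left-hand sum is $3$ and the closed form is $3$, but the printed middle sum is $12$), so your claim that the recombination yields ``the stated'' middle member silently absorbs a factor of $4$ --- the printed exponent is a typo in the paper, and your derivation, carried out carefully, proves the corrected statement. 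The closed forms are exactly right as you computed them, including the sign cancellation from $(A-C)(B-C)=-(F_r^2+F_rF_{r-1}-F_{r-1}^2)$ and the observation that this denominator never vanishes because $\alpha$ and $\beta$ are irrational.
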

\begin{proof}
Set $(x,y)=(\alpha F_r,F_{r - 1})$ and $(x,y)=(\beta F_r,F_{r - 1})$, in turn, in \eqref{eq.g961w92} and \eqref{value}, respectively.
Use the relations
\begin{equation*}
\alpha F_r + F_{r - 1} = \alpha^r \qquad\mbox{and}\qquad \beta F_r + F_{r - 1} = \beta^r
\end{equation*}
and combine using the Binet formulas.
\end{proof}

\begin{corollary}
If $r$ is a non-zero integer and $n$ is any positive integer, then
\begin{align*}
F_r^2 + F_r F_{r-1} - F_{r-1}^2 \mid F_r^{n+2} L_n + F_{r-1} F_{r}^{n+1} L_{n+1} + F_r F_{r-1}^{n+1} - 2 F_{r-1}^{n+2} ,\\
F_r^2 + F_r F_{r-1} - F_{r-1}^2 \mid F_r^{n+2} F_n + F_{r-1} F_{r}^{n+1} F_{n+1} - F_r F_{r-1}^{n+1}.
\end{align*}
\end{corollary}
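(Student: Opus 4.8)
The plan is to read both divisibility relations directly off the closed forms supplied by Theorem~\ref{thm.ksm9y59}, so that essentially no new work is required. The crucial observation is that the \emph{leftmost} member of each identity in that theorem, namely $\sum_{j=0}^n F_r^{j} F_{r-1}^{n-j} L_j$ and $\sum_{j=0}^n F_r^{j} F_{r-1}^{n-j} F_j$, is a finite sum of products of the integers $F_r$, $F_{r-1}$ and $L_j$ (respectively $F_j$); hence each of these sums is itself an integer.

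I would then introduce the shorthand $D := F_r^2 + F_r F_{r-1} - F_{r-1}^2$ for the common denominator and let $N_1$, $N_2$ denote the two numerators on the far right of Theorem~\ref{thm.ksm9y59}; these are precisely the two quantities claimed to be divisible by $D$ in the corollary. By the theorem, each integer sum $S_i$ equals $N_i/D$, and cross-multiplying gives $N_i = D\,S_i$ with $S_i\in\mathbb Z$. This is exactly the statement $D \mid N_i$, which is what we want.

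The only step that is not purely formal is checking that $D\neq 0$, so that the quotients $N_i/D$ are meaningful and the equation $N_i = D\,S_i$ genuinely encodes divisibility. This I expect to be the sole (minor) obstacle, and I would settle it once and for all: when $F_{r-1}\neq 0$ one writes $D = F_{r-1}^2\bigl((F_r/F_{r-1})^2 + (F_r/F_{r-1}) - 1\bigr)$, which cannot vanish because $t^2+t-1$ has no rational root while $F_r/F_{r-1}$ is rational; and the remaining case $F_{r-1}=0$ forces $r=1$, where $D=F_1^2=1$. Thus $D\neq 0$ for every integer $r$ (in particular for the non-zero $r$ of the hypothesis), and the two divisibilities follow immediately from Theorem~\ref{thm.ksm9y59}.
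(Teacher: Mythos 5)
Your proof is correct and is exactly the argument the paper intends: the corollary is stated without proof because the left-hand sums in Theorem~\ref{thm.ksm9y59} are manifestly integers, so equating them with $N_i/D$ gives $N_i = D\,S_i$ and hence $D \mid N_i$. Your additional verification that $D = F_r^2 + F_r F_{r-1} - F_{r-1}^2 \neq 0$ (via the irrationality of the roots of $t^2+t-1$) is a sensible piece of rigor that the paper leaves implicit, but it does not change the route.
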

In particular,
\begin{align*}
5 \mid (2^{n + 1}L_{n+1} - 2),\\
11 \mid (3^{n + 1}(F_{n+2} + L_{n+1}) - 3\cdot 2^{n + 1}),\\
11 \mid (3^{n + 1}(L_{n+2} + 5F_{n+1}) - 2^{n + 1}).
\end{align*}

\begin{theorem}\label{thm.cr7gcas}
If $r$ and $n$ are any integers, then
\begin{equation}
\begin{split}
2\sum_{j = 0}^n (-1)^{r(n - j)} L_{2rj}
& = \sum_{j = 0}^n \Big ( \frac{L_r}{2}\Big )^j \left( L_{r(2n - j)} + (-1)^{r(n-j)} L_{rj} \right)  \\
& = 2 \frac{(-1)^{r+1} L_{2r(n+1)} - (-1)^{r(n+1)} L_{2r} + L_{2rn} + 2(-1)^{rn}}{(-1)^{r+1} 5 F_r^2}
\end{split}
\end{equation}
and
\begin{equation}
\begin{split}
2\sum_{j = 0}^n (-1)^{r(n - j)} F_{2rj}
& = \sum_{j = 0}^n \Big ( \frac{L_r}{2}\Big )^j \left( F_{r(2n - j)} + (-1)^{r(n-j)} F_{rj} \right) \\
& = 2 \frac{(-1)^{r+1} F_{2r(n+1)} + (-1)^{r(n+1)} F_{2r} + F_{2rn}}{(-1)^{r+1} 5 F_r^2}.
\end{split}
\end{equation}
\end{theorem}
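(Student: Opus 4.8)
The plan is to deduce both identities from Theorem~\ref{thm1} by multiplying its three equal members by a single Lucas or Fibonacci factor and then redistributing the products with the elementary product-to-sum rules (immediate from the Binet forms). Concretely, I would start from the instance of Lemma~\ref{main_lem} that underlies Theorem~\ref{thm1}, namely the chain obtained from $x=\alpha^r$, $y=\beta^r$,
\[
\sum_{j=0}^n(-1)^{rj}L_{r(n-2j)}=\sum_{j=0}^n\left(\tfrac{L_r}{2}\right)^j L_{r(n-j)}=\frac{2F_{r(n+1)}}{F_r},
\]
and multiply every member by $L_{rn}$ to obtain the first (Lucas) identity and by $F_{rn}$ to obtain the second (Fibonacci) identity.

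For the first identity I would push the factor $L_{rn}$ inside each sum using $L_mL_k=L_{m+k}+(-1)^kL_{m-k}$. On the left member (with $m=rn$, $k=r(n-2j)$) this gives $L_{rn}L_{r(n-2j)}=L_{2r(n-j)}+(-1)^{r(n-2j)}L_{2rj}$; collecting signs, $(-1)^{rj}(-1)^{r(n-2j)}=(-1)^{r(n-j)}$, and after the reindexing $j\mapsto n-j$ in the remaining piece the two pieces coincide and collapse to $2\sum_{j=0}^n(-1)^{r(n-j)}L_{2rj}$, the left-hand side of the statement. On the middle member (with $m=rn$, $k=r(n-j)$) the same rule gives $L_{rn}L_{r(n-j)}=L_{r(2n-j)}+(-1)^{r(n-j)}L_{rj}$, turning $L_{rn}\sum\left(\tfrac{L_r}{2}\right)^j L_{r(n-j)}$ into exactly the middle sum of the theorem. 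Hence both sums equal the common value $L_{rn}\cdot\tfrac{2F_{r(n+1)}}{F_r}$.

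For the second identity the argument goes through verbatim after replacing the outer factor $L_{rn}$ by $F_{rn}$ and the Lucas product rule by the mixed one $F_mL_k=F_{m+k}+(-1)^kF_{m-k}$; note that the middle member of Theorem~\ref{thm1} still carries the Lucas factor $L_{r(n-j)}$, so multiplying by $F_{rn}$ and using $F_{rn}L_{r(n-j)}=F_{r(2n-j)}+(-1)^{r(n-j)}F_{rj}$ produces $\sum\left(\tfrac{L_r}{2}\right)^j\bigl(F_{r(2n-j)}+(-1)^{r(n-j)}F_{rj}\bigr)$, while the left member becomes $2\sum(-1)^{r(n-j)}F_{2rj}$. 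Both then equal $F_{rn}\cdot\tfrac{2F_{r(n+1)}}{F_r}$.

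The last task is to reconcile the compact closed forms $\tfrac{2L_{rn}F_{r(n+1)}}{F_r}$ and $\tfrac{2F_{rn}F_{r(n+1)}}{F_r}$ with the four- and three-term right-hand sides of the theorem. This I would carry out by expanding in Binet form, using $\alpha\beta=-1$, $\alpha-\beta=\sqrt5$, and $5F_r^2=(\alpha^r-\beta^r)^2=L_{2r}-2(-1)^r$, so that the denominator $(-1)^{r+1}5F_r^2$ and the cross terms yielding the $(-1)^{rn}$ and $(-1)^{r(n+1)}$ contributions appear on their own. I expect this final bookkeeping to be the only delicate point; the genuine content of the proof is the recognition that multiplying Theorem~\ref{thm1} by $L_{rn}$ (respectively $F_{rn}$) and applying the product-to-sum identities reproduces precisely the two sums in the statement, with everything remaining valid for all integers $r,n$ under the usual negative-index conventions.
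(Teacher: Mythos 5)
Your proposal is correct, but it takes a genuinely different route from the paper. The paper proves Theorem~\ref{thm.cr7gcas} by returning to Lemma~\ref{main_lem}: it substitutes $(x,y)=(\alpha^{2r},(-1)^r)$ and $(x,y)=(\beta^{2r},(-1)^r)$, uses $\alpha^{2r}+(-1)^r=\alpha^r L_r$ and $\beta^{2r}+(-1)^r=\beta^r L_r$, and then combines the two resulting equations via the Binet formulas (adding for the Lucas identity, subtracting and dividing by $\sqrt5$ for the Fibonacci one). You instead derive the theorem as a corollary of Theorem~\ref{thm1}, multiplying its three members by $L_{rn}$ (resp.\ $F_{rn}$) and expanding with the product-to-sum rules $L_mL_k=L_{m+k}+(-1)^kL_{m-k}$ and $F_mL_k=F_{m+k}+(-1)^kF_{m-k}$. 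All of your steps check out, and the final bookkeeping you defer is indeed routine: $L_{rn}F_{r(n+1)}=F_{r(2n+1)}+(-1)^{rn}F_r$ shows your Lucas value $2L_{rn}F_{r(n+1)}/F_r$ agrees with the stated four-term closed form, while $5F_{rn}F_{r(n+1)}=L_{r(2n+1)}-(-1)^{rn}L_r$ together with $F_rL_{r(2n+1)}=F_{2r(n+1)}-(-1)^rF_{2rn}$ and $F_rL_r=F_{2r}$ yields the stated Fibonacci numerator. It is worth observing that the two arguments are the same algebra in disguise: $f(x,y)$ in Lemma~\ref{main_lem} is homogeneous of degree $n$ and $(\alpha^{2r},(-1)^r)=\alpha^r\cdot(\alpha^r,\beta^r)$, so the paper's two substituted equations are precisely $\alpha^{rn}$ and $\beta^{rn}$ times the chain of Theorem~\ref{thm1}, and adding (resp.\ subtracting over $\sqrt5$) them is exactly your multiplication by $L_{rn}$ (resp.\ $F_{rn}$). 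Your formulation buys a cleaner logical structure --- Theorem~\ref{thm.cr7gcas} becomes a formal consequence of Theorem~\ref{thm1} plus classical identities, with no return to the lemma --- whereas the paper's buys uniformity, since every theorem there follows the same template of choosing $(x,y)$ in Lemma~\ref{main_lem}.
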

\begin{proof}
Set $(x,y)=(\alpha^{2r},(-1)^r)$ and $(x,y)=(\beta^{2r},(-1)^r)$, in turn, in \eqref{eq.g961w92} and \eqref{value}, respectively.
Use the relations
\begin{equation*}
\alpha^{2r} + (-1)^r = \alpha^r L_r \qquad\mbox{and}\qquad \beta^{2r} + (-1)^r = \beta^r L_r
\end{equation*}
and combine using the Binet formulas.
\end{proof}

\begin{corollary}
If $r$ is a non-zero integer and $n$ is any positive integer, then
\begin{align*}
5 F_r^2 \mid (-1)^{r+1} F_{2r(n+1)} + (-1)^{r(n+1)} F_{2r} + F_{2rn}.
\end{align*}
\end{corollary}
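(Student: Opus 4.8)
The plan is to read the divisibility straight off the second closed-form evaluation in Theorem~\ref{thm.cr7gcas}, exploiting the fact that one side of that identity is manifestly an integer. Concretely, I would start from
\[
2\sum_{j = 0}^n (-1)^{r(n - j)} F_{2rj}
= 2\,\frac{(-1)^{r+1} F_{2r(n+1)} + (-1)^{r(n+1)} F_{2r} + F_{2rn}}{(-1)^{r+1}\,5 F_r^2},
\]
cancel the common factor $2$ on both sides, and abbreviate the numerator as $N := (-1)^{r+1} F_{2r(n+1)} + (-1)^{r(n+1)} F_{2r} + F_{2rn}$.

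The key observation is that the left-hand sum $\sum_{j = 0}^n (-1)^{r(n - j)} F_{2rj}$ is an integer: each summand is a Fibonacci number of integer index (recall $r,j,n$ are integers) multiplied by a sign $(-1)^{r(n-j)} \in \{-1,+1\}$, and a finite sum of integers is an integer. Since the hypothesis $r \neq 0$ forces $F_r \neq 0$, the denominator $(-1)^{r+1}\,5 F_r^2$ is a nonzero integer. Hence the equality of the two sides shows that $N/\bigl((-1)^{r+1}\,5 F_r^2\bigr)$ is an integer, which is precisely the statement $(-1)^{r+1}\,5 F_r^2 \mid N$.

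Finally I would absorb the unit: because $(-1)^{r+1} = \pm 1$, the divisibility $(-1)^{r+1}\,5 F_r^2 \mid N$ is equivalent to $5 F_r^2 \mid N$, which is exactly the assertion of the corollary. There is no genuinely hard step here; the only points requiring care are tracking the spurious factor $2$ present on both sides of the theorem (so as not to weaken the conclusion or lose a factor upon cancellation) and confirming integrality of every summand, which the integer-index Fibonacci numbers guarantee. The positivity of $n$ only serves to make the sum nonempty and the displayed indices behave as written.
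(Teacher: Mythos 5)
Your proof is correct and is essentially the paper's own (implicit) argument: the corollary is read directly off Theorem~\ref{thm.cr7gcas} by noting that the left-hand sum $\sum_{j=0}^n (-1)^{r(n-j)} F_{2rj}$ is an integer, so the nonzero integer denominator $(-1)^{r+1}5F_r^2$ must divide the numerator, and the sign $(-1)^{r+1}$ is a unit that can be absorbed. Your care with the factor $2$, the nonvanishing of $F_r$ for $r \neq 0$, and the integrality of Fibonacci numbers at arbitrary integer indices covers all the needed details.
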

In particular,
\begin{equation*}
5 \mid (F_{2(n+1)} + F_{2n} + (-1)^{n+1}) = L_{2n + 1} + (-1)^{n + 1}.
\end{equation*}

\begin{remark}
We also have that
\begin{align*}
5 F_r^2 \mid (-1)^{r+1} L_{2r(n+1)} - (-1)^{r(n+1)} L_{2r} + L_{2rn} + 2(-1)^{rn}.
\end{align*}
But this is obvious as
\begin{equation*}
(-1)^{r+1} L_{2r(n+1)} - (-1)^{r(n+1)} L_{2r} + L_{2rn} + 2(-1)^{rn} = (-1)^{r+1} 5 F_r F_{2rn+r} - (-1)^{r(n+1)} 5 F_r^2
\end{equation*}
and because $F_r|F_{r(2n+1)}$.
\end{remark}

\begin{theorem}
If $n$ and $r$ are any integers, then
\begin{eqnarray}
\sum_{j = 0}^n {L_{2r}^j 2^{n - j + 1} } & = &
\begin{cases}
 \sum_{j = 0}^n {\left( {\frac{{L_r^2 }}{2}} \right)^j\left( {L_{2r}^{n - j}  + 2^{n - j} } \right)},&\mbox{if $r$ is even};  \\
 \sum_{j = 0}^n {\left( {\frac{{5F_r^2 }}{2}} \right)^j\left( {L_{2r}^{n - j}  + 2^{n - j} } \right)},&\mbox{if $r$ is odd};  \\
\end{cases} \nonumber \\
& = & \begin{cases}
2(L_{2r}^{n + 1}  - 2^{n + 1} )/(5F_r^2 ), & \mbox{if $r$ is even};  \\
2(L_{2r}^{n + 1}  - 2^{n + 1} )/L_r^2, & \mbox{if $r$ is odd}.  \\
\end{cases}
\end{eqnarray}
\end{theorem}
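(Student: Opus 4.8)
The plan is to apply Lemma~\ref{main_lem} directly with the purely numerical substitution $x = L_{2r}$ and $y = 2$. Since the lemma holds for arbitrary complex values, no appeal to the Binet forms is needed for the substitution itself; the only external inputs are the two standard squaring identities
\[
L_{2r} = L_r^2 - 2(-1)^r = 5F_r^2 + 2(-1)^r,
\]
which are immediate from expanding $(\alpha^r \pm \beta^r)^2$. With this choice, \eqref{value} gives
\[
f(L_{2r},2) = \frac{2\bigl(L_{2r}^{n+1} - 2^{n+1}\bigr)}{L_{2r} - 2},
\]
and expanding the finite geometric quotient as $2\sum_{j=0}^n L_{2r}^{j}\,2^{n-j}$ recovers precisely the left-hand member $\sum_{j=0}^n L_{2r}^j 2^{n-j+1}$. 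Thus the outermost two expressions in the asserted chain agree as soon as the denominator is rewritten, and the whole content of the theorem reduces to evaluating $(L_{2r}+2)/2$ and $L_{2r}-2$ in terms of $L_r^2$ and $5F_r^2$.

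For the middle member I would read off the second expression in \eqref{eq.g961w92}: substituting $x=L_{2r}$, $y=2$ there yields
\[
f(L_{2r},2) = \sum_{j=0}^n \left(\frac{L_{2r}+2}{2}\right)^{j}\bigl(L_{2r}^{n-j} + 2^{n-j}\bigr),
\]
so the only task is to simplify $(L_{2r}+2)/2$. Here the parity split enters: using $L_{2r}=L_r^2-2(-1)^r$ for even $r$ and $L_{2r}=5F_r^2+2(-1)^r$ for odd $r$ gives $L_{2r}+2 = L_r^2$ when $r$ is even and $L_{2r}+2 = 5F_r^2$ when $r$ is odd, which is exactly the case distinction in the statement. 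The two value-cases come from the same identities applied to the denominator instead: $L_{2r}-2 = 5F_r^2$ for even $r$ and $L_{2r}-2 = L_r^2$ for odd $r$, producing the stated closed forms $2(L_{2r}^{n+1}-2^{n+1})/(5F_r^2)$ and $2(L_{2r}^{n+1}-2^{n+1})/L_r^2$.

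There is no genuine obstacle in this argument; it is a clean specialization of the lemma. The only points requiring care are bookkeeping and a degeneracy. The bookkeeping subtlety is that $L_r^2$ and $5F_r^2$ interchange their roles between the middle expression and the value expression within each fixed parity class (for even $r$, $L_r^2$ appears in the middle while $5F_r^2$ appears in the value, and vice versa for odd $r$), so one must track which of $L_{2r}\pm 2$ is being evaluated. The degeneracy occurs at $r=0$, where $L_{2r}=2=y$ makes both the geometric quotient and the denominator $5F_r^2=0$ indeterminate; that case is excluded by the accompanying corollary's hypothesis $r\neq 0$, but can in any event be recovered from the $x\to y$ limit, which gives $\sum_{j=0}^n 2^{n+1}=(n+1)2^{n+1}$ in agreement with the left-hand side.
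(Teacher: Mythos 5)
Your proposal is correct and takes essentially the same route as the paper: the paper's proof likewise sets $x=L_{2r}$, $y=2$ in \eqref{eq.g961w92} and \eqref{value} and uses the parity-split identity $L_{2r}+2=L_r^2$ ($r$ even), $L_{2r}+2=5F_r^2$ ($r$ odd). Your write-up is in fact slightly more explicit, since you also record the companion identity for the denominator, $L_{2r}-2=5F_r^2$ ($r$ even), $L_{2r}-2=L_r^2$ ($r$ odd), which the paper uses tacitly, and you flag the degenerate case $r=0$, which the paper's blanket hypothesis ``any integers'' silently overlooks.
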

\begin{proof}
Set $x=L_{2r}$ and $y=2$ in \eqref{eq.g961w92} and \eqref{value}, respectively, and use
\begin{equation*}
L_{2r} + 2
=  \begin{cases}
 5F_r^2 , \quad\mbox{$r$ odd};\\
 L_r^2 , \quad\mbox{$r$ even}.\\
 \end{cases}\label{eq.e10mht5}
\end{equation*}
\end{proof}
\begin{corollary}
If $r$ is a non-zero integer and $n$ is any positive integer, then
\begin{align*}
5F_r^2\mid (L_{2r}^{n + 1} - 2^{n + 1}),&\text{ if $r$ is even},\\
L_r^2\mid (L_{2r}^{n + 1} - 2^{n + 1}),&\text{ if $r$ is odd}.
\end{align*}
\end{corollary}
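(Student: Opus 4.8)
The plan is to read the two divisibilities straight off the closed form established in the theorem just proved, which states
\[
\sum_{j=0}^n L_{2r}^{\,j}\, 2^{\,n-j+1}
=\frac{2\bigl(L_{2r}^{\,n+1}-2^{\,n+1}\bigr)}{D_r},
\qquad
D_r=\begin{cases} 5F_r^2, & r\text{ even},\\ L_r^2, & r\text{ odd}.\end{cases}
\]
Here $D_r\neq 0$ exactly because $r\neq 0$: in the even case this guarantees $F_r\neq 0$, while a Lucas number never vanishes in the odd case. The left-hand side is manifestly a (positive) integer, being a finite sum of products of the integer $L_{2r}$ and powers of $2$; the hypotheses $n\geq 1$ and $0\leq j\leq n$ ensure every exponent is nonnegative, so no fractional terms arise.

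The only point needing care is the factor $2$ in the numerator. Reading the displayed identity as $D_r\mid 2\bigl(L_{2r}^{\,n+1}-2^{\,n+1}\bigr)$ is immediate, but the Corollary claims the sharper $D_r\mid\bigl(L_{2r}^{\,n+1}-2^{\,n+1}\bigr)$, and $D_r$ can be even (for instance $r=3$ gives $D_r=L_3^2=16$). To dispose of the extra $2$ I would factor it out of the sum,
\[
\sum_{j=0}^n L_{2r}^{\,j}\,2^{\,n-j+1}=2\,T,\qquad T:=\sum_{j=0}^n L_{2r}^{\,j}\,2^{\,n-j}\in\mathbb{Z},
\]
substitute into the theorem, and cancel the common factor $2$. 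This yields $D_r\,T=L_{2r}^{\,n+1}-2^{\,n+1}$, so $D_r$ divides $L_{2r}^{\,n+1}-2^{\,n+1}$ with the explicit integer quotient $T$. Separating the two parities then produces precisely the asserted statements, namely $5F_r^2\mid(L_{2r}^{n+1}-2^{n+1})$ for even $r$ and $L_r^2\mid(L_{2r}^{n+1}-2^{n+1})$ for odd $r$.

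I do not expect a genuine obstacle: the Corollary is a direct consequence of the closed-form sum, and the whole content of the argument is the bookkeeping of that single factor of $2$ together with the observation that the summatory side is an honest integer. It is worth recording that this same reasoning explains both hypotheses—$r\neq 0$ keeps $D_r$ from degenerating to $0$, and $n$ positive makes $T$ a legitimate integer sum—so no separate treatment of edge cases is required.
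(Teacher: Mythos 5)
Your proof is correct and takes essentially the same approach as the paper, which states this corollary without proof as an immediate consequence of the preceding theorem: the sum $\sum_{j=0}^n L_{2r}^j 2^{n-j+1}$ is an integer, so the closed form forces the divisor ($5F_r^2$ for even $r$, $L_r^2$ for odd $r$) to divide $L_{2r}^{n+1}-2^{n+1}$. Your explicit cancellation of the factor $2$, writing the sum as $2T$ with $T=\sum_{j=0}^n L_{2r}^j 2^{n-j}\in\mathbb{Z}$, is precisely the bookkeeping the paper leaves implicit, and it is genuinely needed since the divisor can be even (e.g., $L_3^2=16$).
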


\begin{theorem}
If $r$ and $n$ are any integers, then
\begin{equation}
\begin{split}
2\sum_{j = 0}^n (- 1)^{rj} 4^j F_r^{2n - 2j} 5^{n - j} &= \sum_{j = 0}^n {\left( {\frac{{L_r^2 }}{2}} \right)^j\left( {5^{n - j} F_r^{2(n - j)}  + ( - 1)^{r(n - j)} 4^{n - j} } \right)} \\
&= 2 \frac{(5F_r^2)^{n+1} - (-1)^{r(n+1)} 4^{n+1}}{5F_r^2 - (-1)^r 4}.
\end{split}
\end{equation}
\end{theorem}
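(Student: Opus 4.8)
The plan is to follow the same template used for every previous theorem in this section: substitute a judicious choice of $x$ and $y$ into both forms of $f(x,y)$ supplied by Lemma~\ref{main_lem}. Reading the closed form on the right, namely $2\bigl((5F_r^2)^{n+1} - (-1)^{r(n+1)}4^{n+1}\bigr)/(5F_r^2 - (-1)^r 4)$, against the expression $2(x^{n+1}-y^{n+1})/(x-y)$ from~\eqref{value} immediately suggests the choice $x = 5F_r^2$ and $y = (-1)^r 4$. With this choice $x^{n+1} = (5F_r^2)^{n+1}$ and $y^{n+1} = (-1)^{r(n+1)}4^{n+1}$, so~\eqref{value} produces the stated value with no further work.

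For the middle member I would invoke the identity $L_r^2 = 5F_r^2 + 4(-1)^r$, which follows at once from the Binet forms since $(\alpha-\beta)^2 = 5$ and $(\alpha\beta)^r = (-1)^r$. Under the above substitution this yields $(x+y)/2 = (5F_r^2 + (-1)^r 4)/2 = L_r^2/2$, while $x^{n-j} + y^{n-j}$ expands to $5^{n-j}F_r^{2(n-j)} + (-1)^{r(n-j)}4^{n-j}$. Plugging these into the second form of~\eqref{eq.g961w92} reproduces the middle sum of the claimed chain verbatim.

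The only genuine computation lies in the left-hand member, coming from the first form of~\eqref{eq.g961w92}, namely $\sum_{j=0}^n (xy)^j (x^{n-2j} + y^{n-2j})$. Here $xy = 20(-1)^r F_r^2$, and I would expand the two summands $(xy)^j x^{n-2j}$ and $(xy)^j y^{n-2j}$ separately, using $20^j = 4^j 5^j$. A short simplification shows the first piece equals $\sum_{j=0}^n (-1)^{rj}4^j F_r^{2n-2j}5^{n-j}$, while the second piece equals $\sum_{j=0}^n (-1)^{r(n-j)}4^{n-j}5^j F_r^{2j}$; reindexing the latter by $k = n-j$ turns it into the identical sum. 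Adding the two halves therefore produces exactly twice the sum appearing on the left of the theorem.

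The main obstacle, such as it is, lies entirely in this last bookkeeping step: one must notice that the two halves of the first form of $f(x,y)$ collapse to the same sum after a change of summation variable, and this coincidence is precisely what accounts for the factor of $2$ in the statement. Everything else is a direct substitution governed by the single quadratic relation $L_r^2 = 5F_r^2 + 4(-1)^r$, so I anticipate no conceptual difficulty beyond keeping the powers of $4$, $5$, and $F_r$ correctly aligned throughout the simplification.
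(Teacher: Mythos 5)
Your proposal is correct and is essentially identical to the paper's proof: the paper likewise sets $x=5F_r^2$ and $y=(-1)^r 4$ in \eqref{eq.g961w92} and \eqref{value} and invokes the identity $5F_r^2+(-1)^r 4=L_r^2$. Your additional bookkeeping (expanding $(xy)^j$ and reindexing $j\mapsto n-j$ to explain the factor of $2$) is a correct elaboration of details the paper leaves implicit.
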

\begin{proof}
Set $x=5F_r^2$ and $y=(-1)^r4$ in \eqref{eq.g961w92} and \eqref{value}, respectively, and use the identity
\begin{equation*}
5F_r^2 + (-1)^r4=L_r^2.
\end{equation*}
\end{proof}

\begin{theorem}
If $r$, $n$ and $t$ are any integers, then
\begin{equation}
\begin{split}
\sum_{j = 0}^{2n} {L_r^j L_{r - 1}^{2n - j}L_{j + t}}&= \sum_{j = 0}^n {\frac{{5^j }}{{2^{2j + 1} }}\left( {L_r^{2n - 2j} L_{2n - 2j + 2jr + t}  + L_{r - 1}^{2n - 2j} L_{2jr + t} } \right)} \\
&\qquad\quad + \sum_{j = 1}^n {\frac{{5^j }}{{2^{2j} }}\left( {L_r^{2n - 2j + 1} F_{2n - 2j + (2j - 1)r + t}  + L_{r - 1}^{2n - 2j + 1} F_{(2j - 1)r + t} } \right)} \\
& = \frac{{L_r^{2n + 1} \left( {L_r L_{2n + t}  + L_{r - 1} L_{2n + t + 1} } \right) - L_{r - 1}^{2n + 1} \left( {L_r L_{t - 1}  + L_{r - 1} L_t } \right)}}{{L_{r - 2} L_{r + 1}  + L_r L_{r - 1} }}
\end{split}
\end{equation}
and
\begin{equation}
\begin{split}
\sum_{j = 0}^{2n} {L_r^j L_{r - 1}^{2n - j}F_{j + t}}&= \sum_{j = 0}^n {\frac{{5^j }}{{2^{2j + 1} }}\left( {L_r^{2n - 2j} F_{2n - 2j + 2jr + t}  + F_{r - 1}^{2n - 2j} F_{2jr + t} } \right)} \\
&\qquad\quad + \sum_{j = 1}^n {\frac{{5^j }}{{2^{2j} }}\left( {L_r^{2n - 2j + 1} L_{2n - 2j + (2j - 1)r + t}  + L_{r - 1}^{2n - 2j + 1} L_{(2j - 1)r + t} } \right)}\\
& = \frac{{L_r^{2n + 1} \left( {L_r F_{2n + t}  + L_{r - 1} F_{2n + t + 1} } \right) - L_{r - 1}^{2n + 1} \left( {L_r F_{t - 1}  + L_{r - 1} F_t } \right)}}{{L_{r - 2} L_{r + 1}  + L_r L_{r - 1} }}.
\end{split}
\end{equation}
\end{theorem}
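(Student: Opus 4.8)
The plan is to specialize the master identity of Lemma~\ref{main_lem}, in the even-index form~\eqref{eq.jku4p1d} (and~\eqref{value} for the closed form), to the pairs $(x,y)=(\alpha L_r, L_{r-1})$ and $(x,y)=(\beta L_r, L_{r-1})$, and then to take an $\alpha^t$-, $\beta^t$-weighted combination of the two instances so as to manufacture the shift $j\mapsto j+t$ inside the summand. The algebraic engine is the pair of relations
\[
\alpha L_r + L_{r-1} = \sqrt5\,\alpha^r, \qquad \beta L_r + L_{r-1} = -\sqrt5\,\beta^r,
\]
which follow from $\alpha\beta=-1$ together with $\alpha^2+1=\sqrt5\,\alpha$ and $\beta^2+1=-\sqrt5\,\beta$; these play exactly the role here that $\alpha F_r+F_{r-1}=\alpha^r$ played in the proof of Theorem~\ref{thm.ksm9y59}.

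First I would record that, for any $(x,y)$, the left-hand (first-form) member of~\eqref{eq.jku4p1d} collapses to twice a geometric sum, $\sum_{j=0}^{2n}(xy)^j(x^{2n-2j}+y^{2n-2j}) = 2\sum_{j=0}^{2n}x^j y^{2n-j}$, since each of the two pieces reindexes to $\sum_{j=0}^{2n}x^j y^{2n-j}$. Hence, writing $L_{j+t}=\alpha^{j+t}+\beta^{j+t}$, the combination $\alpha^t f(\alpha L_r,L_{r-1})+\beta^t f(\beta L_r,L_{r-1})$ reproduces $2\sum_{j=0}^{2n} L_r^j L_{r-1}^{2n-j} L_{j+t}$ on the first-form side; the $F_{j+t}$-statement comes out identically from $\tfrac1{\sqrt5}\bigl(\alpha^t f(\alpha L_r,L_{r-1})-\beta^t f(\beta L_r,L_{r-1})\bigr)$ via $F_{j+t}=(\alpha^{j+t}-\beta^{j+t})/\sqrt5$.

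The substance is the second-form side. Substituting the two key relations gives $((x+y)/2)^{2j}=5^{j}\alpha^{2rj}/2^{2j}$ for the $\alpha$-instance and $5^{j}\beta^{2rj}/2^{2j}$ for the $\beta$-instance (the sign squaring away), while $((x+y)/2)^{2j-1}$ equals $+5^{j-1}\sqrt5\,\alpha^{(2j-1)r}/2^{2j-1}$ and $-5^{j-1}\sqrt5\,\beta^{(2j-1)r}/2^{2j-1}$ respectively. Forming the same weighted combinations and expanding $x^{2n-2j}+y^{2n-2j}$, the even-index block ($j=0,\dots,n$) produces combinations $\alpha^{\bullet}\pm\beta^{\bullet}$ that collapse to Lucas numbers in the first statement and to Fibonacci numbers in the second, whereas the odd-index block ($j=1,\dots,n$) carries the extra $\sqrt5$ and collapses the other way — Fibonacci numbers in the first statement, Lucas numbers in the second — after using $\alpha^m-\beta^m=\sqrt5\,F_m$. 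Dividing by $2$ yields the two displayed double sums, the shift in each index being the exponent of $L_r$ in that term (namely $2n-2j$ in the even block and $2n-2j+1$ in the odd block) plus the contribution $(2j)r$ or $(2j-1)r$ from $\alpha^{rj}$ and the global offset $t$. I would keep careful track of the powers of $5$ and $2$ here, since that is where the coefficients $5^j/2^{2j+1}$ in the even block and the power of $5$ over $2^{2j}$ in the odd block are fixed; this bookkeeping is the step most prone to slips.

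Finally, for the closed forms I would use~\eqref{value} instead, giving $f(\gamma L_r,L_{r-1})=2\bigl((\gamma L_r)^{2n+1}-L_{r-1}^{2n+1}\bigr)/(\gamma L_r-L_{r-1})$ for $\gamma\in\{\alpha,\beta\}$, take the same weighted combination, and clear the common denominator
\[
(\alpha L_r-L_{r-1})(\beta L_r-L_{r-1}) = -(L_r^2+L_rL_{r-1}-L_{r-1}^2) = -\bigl(L_{r-2}L_{r+1}+L_rL_{r-1}\bigr),
\]
the last equality using $L_r-L_{r-1}=L_{r-2}$ and $L_r+L_{r-1}=L_{r+1}$. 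Expanding the numerator and repeatedly applying $\alpha\beta=-1$ (so that, e.g., $\alpha^{2n+1+t}\beta+\beta^{2n+1+t}\alpha=-L_{2n+t}$ while $\alpha^{2n+1+t}+\beta^{2n+1+t}=L_{2n+t+1}$) groups the result into $L_r^{2n+1}(L_rL_{2n+t}+L_{r-1}L_{2n+t+1})-L_{r-1}^{2n+1}(L_rL_{t-1}+L_{r-1}L_t)$, with the analogous Fibonacci numerator in the second case; the minus sign in the denominator cancels against the sign of this numerator, leaving the stated denominator $L_{r-2}L_{r+1}+L_rL_{r-1}$. The main obstacle throughout is purely organizational — correctly matching the $\sqrt5$ parities across the even/odd blocks and the $L$- versus $F$-weighted combinations, and simplifying the closed-form numerator — rather than any conceptual difficulty.
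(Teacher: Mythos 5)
Your proposal is correct and is, in substance, the paper's own proof: the paper substitutes only $x=\alpha L_r$, $y=L_{r-1}$ into \eqref{eq.jku4p1d}, multiplies through by $\alpha^t$, writes $2\alpha^s=L_s+F_s\sqrt 5$ and compares coefficients of $\sqrt 5$, which is exactly equivalent to your conjugate-pair combinations $\alpha^t f(\alpha L_r,L_{r-1})\pm\beta^t f(\beta L_r,L_{r-1})$ (adding or subtracting the conjugate equation is the same as separating rational parts from $\sqrt5$-parts), organized as in the proof of Theorem~\ref{thm.ksm9y59}.

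One point deserves emphasis: if you carry out your bookkeeping exactly, you will \emph{not} reproduce the printed middle expressions, and that is the paper's fault, not yours. Your derivation gives, in the odd block, the subscript $(2n-2j+1)+(2j-1)r+t$ (the exponent of $L_r$ plus the $r$-contribution plus $t$, as you say), whereas the printed theorem omits the $+1$; moreover, in the second identity the even-block factor $F_{r-1}^{2n-2j}$ should read $L_{r-1}^{2n-2j}$, and the odd-block coefficient there should be $5^{j-1}/2^{2j}$ rather than $5^j/2^{2j}$ (the prefactor $\sqrt5$ coming from the odd power of $(x+y)/2$ multiplies $F_s\sqrt5$ to give an extra factor of $5$ only in the Fibonacci-terms of the \emph{first} identity; in the Lucas-terms of the second identity it simply survives as the overall $\sqrt5$, so no extra $5$ appears). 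A quick numerical check confirms all of this: for $r=2$, $n=1$, $t=0$, the left-hand sum and the closed form of the first identity both equal $32$, while the printed middle expression equals $113/4$; with your corrected subscript it equals $32$. Likewise the second identity gives $12$ on the left and in the closed form, versus $93/4$ as printed and $12$ after the corrections. So your proof is sound and establishes the evidently intended statement; the discrepancies you would encounter against the displayed double sums are typographical errors in the paper, not slips in your derivation.
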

\begin{proof}
Set $x=\alpha L_r$ and $y=L_{r -1}$ in~\eqref{eq.jku4p1d}, noting that
\begin{equation}
\alpha L_r + L_{r - 1}=\alpha^r\sqrt 5.
\end{equation}
Multiply through the resulting equation by $\alpha^t$. Use $2\alpha^s=L_s + F_s\sqrt 5$ to reduce the resulting equation.
Finally, compare the coefficients of $\sqrt 5$.
\end{proof}

\begin{corollary}
If $r$, $n$ and $t$ are any integers, then
\begin{gather}
L_{r - 2} L_{r + 1}  + L_r L_{r - 1} \mid L_r^{2n + 1} \left( {L_r L_{2n + t}  + L_{r - 1} L_{2n + t + 1} } \right)
- L_{r - 1}^{2n + 1} \left( {L_r L_{t - 1}  + L_{r - 1} L_t } \right),\label{eq.fs9p2xj}\\
L_{r - 2} L_{r + 1}  + L_r L_{r - 1} \mid L_r^{2n + 1} \left( {L_r F_{2n + t}  + L_{r - 1} F_{2n + t + 1} } \right)
- L_{r - 1}^{2n + 1} \left( {L_r F_{t - 1}  + L_{r - 1} F_t } \right)\label{eq.wl74ip3}.
\end{gather}
In particular,
\begin{align*}
11&\mid3^{2n + 1} \left( {L_{2n}  + 5F_{2n + 1} } \right) + 1,\\
11&\mid3^{2n + 1} \left( {F_{2n}  + L_{2n + 1} } \right) - 3.
\end{align*}
\end{corollary}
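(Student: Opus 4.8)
The plan is to read both divisibility relations straight off the preceding theorem and then produce the two numerical instances by specializing the parameters. First I would observe that for integer $r,n,t$ the left-hand sums $\sum_{j=0}^{2n} L_r^j L_{r-1}^{2n-j} L_{j+t}$ and $\sum_{j=0}^{2n} L_r^j L_{r-1}^{2n-j} F_{j+t}$ are sums of products of integers, hence integers. The theorem equates each of them to a fraction with common denominator $D := L_{r-2}L_{r+1} + L_r L_{r-1}$; since the value of each fraction is an integer, $D$ must divide the respective numerator, which is precisely \eqref{eq.fs9p2xj} and \eqref{eq.wl74ip3}. Before invoking this I would record that $D\neq 0$: using $L_r - L_{r-1} = L_{r-2}$ and $L_r + L_{r-1} = L_{r+1}$ one has $D = L_r^2 + L_r L_{r-1} - L_{r-1}^2 = -(\alpha L_r - L_{r-1})(\beta L_r - L_{r-1})$, and neither factor can vanish because $\alpha,\beta$ are irrational while $L_r\neq 0$ for every integer $r$.

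For the particular cases I would set $r=2$, which collapses the divisor to $D = L_0 L_3 + L_2 L_1 = 2\cdot 4 + 3\cdot 1 = 11$, and take $t=0$. With $L_2 = 3$, $L_1 = 1$, $L_{-1} = -1$ and $L_0 = 2$, the numerator on the right of \eqref{eq.fs9p2xj} becomes $3^{2n+1}(3L_{2n} + L_{2n+1}) - (3L_{-1} + L_0) = 3^{2n+1}(3L_{2n} + L_{2n+1}) + 1$. The only remaining step is the cosmetic rewriting $3L_{2n} + L_{2n+1} = L_{2n} + 5F_{2n+1}$, which follows from $2L_{2n} + L_{2n+1} = L_{2n} + L_{2n+2} = 5F_{2n+1}$ (the identity $5F_m = L_{m-1} + L_{m+1}$). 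This gives $11 \mid 3^{2n+1}(L_{2n} + 5F_{2n+1}) + 1$.

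The second instance comes identically from the Fibonacci version \eqref{eq.wl74ip3} with $r=2$ and $t=0$. Here $F_{-1} = 1$ and $F_0 = 0$, so the numerator reduces to $3^{2n+1}(3F_{2n} + F_{2n+1}) - 3$, and the conversion $3F_{2n} + F_{2n+1} = F_{2n} + L_{2n+1}$, a consequence of $2F_{2n} + F_{2n+1} = F_{2n} + F_{2n+2} = L_{2n+1}$ (the identity $L_m = F_{m-1} + F_{m+1}$), produces $11 \mid 3^{2n+1}(F_{2n} + L_{2n+1}) - 3$.

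There is no genuine analytic difficulty: the entire content of the corollary is already packaged in the theorem's closed-form evaluation, so the only points requiring attention are the bookkeeping at negative indices ($L_{-1},L_0,F_{-1},F_0$) and recognizing that $r=2$ is the value giving the divisor $11$. The step I would watch most closely is confirming $D\neq 0$, since otherwise the divisibility assertions would be vacuous or ill-posed; everything else is routine simplification via the standard Lucas--Fibonacci conversion identities.
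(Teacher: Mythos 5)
Your proposal is correct and matches the paper's (implicit) argument exactly: the corollary is read off from the preceding theorem because the left-hand sum is manifestly an integer while the closed form has denominator $L_{r-2}L_{r+1}+L_rL_{r-1}$, and the particular cases follow by setting $r=2$, $t=0$ and applying the standard conversions $L_{m-1}+L_{m+1}=5F_m$ and $F_{m-1}+F_{m+1}=L_m$. Your additional check that the divisor is nonzero is a worthwhile detail the paper leaves unstated, but it does not change the route.
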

\begin{theorem}
If $r,k,s$ and $n$ are any integers, then
\begin{equation}
\begin{split}
2\sum_{j = 0}^n (- 1)^{(k+s)j} L_{r-s}^{j} L_{2k+r+s}^{n - j}
&= \sum_{j = 0}^n \left( {\frac{L_{k+r}L_{k+s}}{2}} \right)^j \left( L_{2k+r+s}^{n - j}  + (- 1)^{(k+s)(n - j)} L_{r-s}^{n - j} \right) \\
&= 2 \frac{L_{2k+r+s}^{n+1} - (- 1)^{(k+s)(n + 1)} L_{r-s}^{n + 1}}{5 F_{k+r} F_{k+s}}.
\end{split}
\end{equation}
\end{theorem}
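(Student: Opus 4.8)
The plan is to follow the same template as the preceding theorems: specialize the master identity \eqref{eq.g961w92} together with its closed form \eqref{value} at a well-chosen pair $(x,y)$, and then recognize the three members of the asserted chain as the two expressions in \eqref{eq.g961w92} and the right-hand side of \eqref{value}. Reading off the target's denominator $5F_{k+r}F_{k+s}$ and numerator $L_{2k+r+s}^{n+1} - (-1)^{(k+s)(n+1)}L_{r-s}^{n+1}$ and comparing with \eqref{value}, I would set
\[
x = L_{2k+r+s}, \qquad y = (-1)^{k+s}L_{r-s}.
\]
With this choice $y^{m} = (-1)^{(k+s)m}L_{r-s}^{m}$, so \eqref{value} at once delivers the right-most member $2\bigl(L_{2k+r+s}^{n+1} - (-1)^{(k+s)(n+1)}L_{r-s}^{n+1}\bigr)/(x-y)$, provided I can identify $x-y$.

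The crux is therefore to compute $x+y$ and $x-y$ in Lucas/Fibonacci terms. Using $\alpha\beta=-1$ I would rewrite $(-1)^{k+s}L_{r-s} = (\alpha\beta)^{k+s}(\alpha^{r-s}+\beta^{r-s}) = \alpha^{k+r}\beta^{k+s}+\alpha^{k+s}\beta^{k+r}$, after which the four terms of $x\pm y$ regroup as $(\alpha^{k+r}\pm\beta^{k+r})(\alpha^{k+s}\pm\beta^{k+s})$. This is exactly the pair of standard product-to-sum formulas
\[
L_a L_b = L_{a+b} + (-1)^{b}L_{a-b}, \qquad 5F_a F_b = L_{a+b} - (-1)^{b}L_{a-b},
\]
taken at $a=k+r$, $b=k+s$, giving
\[
x+y = L_{2k+r+s} + (-1)^{k+s}L_{r-s} = L_{k+r}L_{k+s},
\]
\[
x-y = L_{2k+r+s} - (-1)^{k+s}L_{r-s} = 5F_{k+r}F_{k+s}.
\]
The first yields $(x+y)/2 = L_{k+r}L_{k+s}/2$, the base of the middle sum, and the second supplies the denominator of the closed form.

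Feeding these into the second expression of \eqref{eq.g961w92} turns its summand $\bigl((x+y)/2\bigr)^j\bigl(x^{n-j}+y^{n-j}\bigr)$ into $\bigl(L_{k+r}L_{k+s}/2\bigr)^j\bigl(L_{2k+r+s}^{n-j} + (-1)^{(k+s)(n-j)}L_{r-s}^{n-j}\bigr)$, which is the middle member verbatim. For the left-most member I would observe that the first expression of \eqref{eq.g961w92}, namely $\sum_{j=0}^n (xy)^j(x^{n-2j}+y^{n-2j})$, simplifies after cancelling $(xy)^j$ to $\sum_{j=0}^n (x^{n-j}y^j + x^j y^{n-j}) = 2\sum_{j=0}^n x^{n-j}y^j$ by the symmetric reindexing $j\mapsto n-j$; substituting $x,y$ then produces the stated single sum $2\sum_{j=0}^n(-1)^{(k+s)j}L_{r-s}^j L_{2k+r+s}^{n-j}$.

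I do not anticipate a genuine obstacle: once the substitution is fixed every step is forced, and the only real content is the pair of formulas for $x\pm y$. The point most worth watching is the bookkeeping of the sign $(-1)^{k+s}$, which must be carried consistently through $y^j$, $y^{n-j}$, and $y^{n+1}$ so that the parities $(k+s)j$, $(k+s)(n-j)$, and $(k+s)(n+1)$ come out exactly as written. As with the earlier results, the statement is to be read under the nondegeneracy condition $F_{k+r}F_{k+s}\neq 0$ (equivalently $k+r\neq 0$ and $k+s\neq 0$), which is precisely what makes \eqref{value} applicable.
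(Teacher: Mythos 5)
Your proposal is correct and matches the paper's proof essentially verbatim: the paper likewise sets $x=L_{2k+r+s}$, $y=(-1)^{k+s}L_{r-s}$ in \eqref{eq.g961w92} and \eqref{value} and invokes the two Vajda product formulas $x+y=L_{k+r}L_{k+s}$ and $x-y=5F_{k+r}F_{k+s}$. The only difference is cosmetic — you derive those two formulas from the Binet forms, whereas the paper simply cites them.
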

\begin{proof}
Set $x=L_{2k+r+s}$ and $y=(- 1)^{k+s} L_{r-s}$ in \eqref{eq.g961w92} and \eqref{value}, respectively, and use the identities \cite{Vajda}
\begin{align*}
L_{2k+r+s} + (- 1)^{k+s} L_{r-s} = L_{k+r} L_{k+s}, \\
L_{2k+r+s} - (- 1)^{k+s} L_{r-s} = 5 F_{k+r} F_{k+s}.
\end{align*}
\end{proof}

\begin{corollary}
If $r,k,s$ are integers and $n$ is any non-negative integer, then
\begin{equation}
5 F_{k+r} F_{k+s} \mid (L_{2k+r+s}^{n + 1} - (- 1)^{(k+s)(n + 1)} L_{r-s}^{n + 1}).
\end{equation}
In particular,
\begin{equation}
5 F_{k+r}^2 \mid (L_{2(k+r)}^{n + 1} - (- 1)^{(k+r)(n + 1)} 2^{n + 1}).
\end{equation}
\end{corollary}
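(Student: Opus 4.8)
The plan is to read off the divisibility directly from the theorem proved immediately above, following the same template that produced the earlier corollaries in this section. That theorem asserts in particular that
\[
2\sum_{j = 0}^n (- 1)^{(k+s)j} L_{r-s}^{j} L_{2k+r+s}^{n - j}
= 2\,\frac{L_{2k+r+s}^{n+1} - (- 1)^{(k+s)(n + 1)} L_{r-s}^{n + 1}}{5 F_{k+r} F_{k+s}},
\]
so after cancelling the factor $2$ it suffices to observe that the left-hand member is an integer. This is the whole idea: the closed form carries the divisor $5F_{k+r}F_{k+s}$ in its denominator, while the equal sum on the left is manifestly integral.

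Concretely, for $n \ge 0$ the sum $\sum_{j=0}^n (-1)^{(k+s)j} L_{r-s}^{j} L_{2k+r+s}^{n-j}$ is a finite sum whose every term is a product of the integers $L_{r-s}$ and $L_{2k+r+s}$ with the sign $(-1)^{(k+s)j}$; hence it is an integer. Equating it with the right-hand quotient then forces
\[
5 F_{k+r} F_{k+s} \mid L_{2k+r+s}^{n+1} - (- 1)^{(k+s)(n + 1)} L_{r-s}^{n + 1},
\]
which is the first assertion. The restriction to $n \ge 0$ is exactly what guarantees a genuine finite integer sum, and the denominator $5F_{k+r}F_{k+s}$ is nonzero under the standing hypotheses, so the quotient is well defined.

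For the ``in particular'' statement I would specialize $s = r$. Then $2k+r+s = 2(k+r)$, $k+s = k+r$, and $r-s = 0$, so that $L_{r-s} = L_0 = 2$ and $5F_{k+r}F_{k+s} = 5F_{k+r}^2$; substituting these into the general divisibility yields $5F_{k+r}^2 \mid L_{2(k+r)}^{n+1} - (-1)^{(k+r)(n+1)} 2^{n+1}$. I do not anticipate any genuine obstacle: the only point requiring a word is the integrality of the defining sum, and everything else is direct substitution into the identity already in hand.
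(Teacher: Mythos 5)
Your proof is correct and matches the paper's (implicit) argument exactly: the corollary is read off from the preceding theorem by noting the sum side is an integer equal to the closed-form quotient, and the second assertion follows by the specialization $s=r$, giving $L_{r-s}=L_0=2$. Nothing further is needed.
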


\section{Extension to Fibonacci polynomials}

Fibonacci (Lucas) polynomials are polynomials that can be defined by the Fibonacci-like recursion
and generalizing Fibonacci (Lucas) numbers. They were already studied in 1883 by E. Catalan and E. Jacobsthal.
For any integer $n\geq0$, the Fibonacci polynomials $\{F_n(x)\}_{n\geq0}$ are defined by the second-order recurrence relation
\begin{equation*}
F_0(x)=0, \quad F_1(x)=1, \quad F_{n+1}(x) = xF_{n}(x) + F_{n-1}(x),
\end{equation*}
while the Lucas polynomials $\{L_n(x)\}_{n\geq0}$ follow the rule
\begin{equation*}
L_0(x)=2, \quad L_1(x)=x, \quad L_{n+1}(x) = xL_{n}(x) + L_{n-1}(x).
\end{equation*}
Their Binet forms are given by
\begin{equation*}
F_{n}(x) = \frac{\alpha^n(x) - \beta^n(x)}{\alpha(x) - \beta(x)}, \qquad L_n(x) = \alpha^n(x) + \beta^n(x),
\end{equation*}
where $\alpha(x)=\frac{x+\sqrt{x^2+4}}{2}$ and $\beta(x)=\frac{x-\sqrt{x^2+4}}{2}$.

\begin{theorem}\label{Fibpol_thm1}
For any non-negative integer $n$ we have
\begin{equation}\label{Fibpol_id1}
\sum_{j=0}^n (-1)^j L_{n-2j}(x) = \sum_{j=0}^n \Big (\frac{x}{2}\Big )^j L_{n-j}(x) = 2 F_{n+1}(x).
\end{equation}
\end{theorem}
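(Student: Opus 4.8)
The plan is to specialize Lemma~\ref{main_lem} to the Binet roots of the Fibonacci polynomials, exactly as the proof of Theorem~\ref{thm1} specialized it to $\alpha^r$ and $\beta^r$. Concretely, I would substitute the two complex variables appearing in \eqref{eq.g961w92} and \eqref{value} by $\alpha(x)$ and $\beta(x)$, the two roots of $t^2 - xt - 1 = 0$.

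The only inputs needed are the two Vieta relations
$$\alpha(x)\beta(x) = -1 \qquad \text{and} \qquad \alpha(x) + \beta(x) = x,$$
both immediate from the explicit formulas for $\alpha(x),\beta(x)$ (or from the defining quadratic). Under the substitution these turn the product factor into $(\alpha(x)\beta(x))^j = (-1)^j$ and the half-sum factor into $\bigl(\tfrac{\alpha(x)+\beta(x)}{2}\bigr)^j = (x/2)^j$, which are precisely the coefficients appearing on the two sides of \eqref{Fibpol_id1}.

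It then remains to read off the three members of \eqref{eq.g961w92}--\eqref{value} through the Binet forms $L_m(x) = \alpha^m(x)+\beta^m(x)$ and $F_m(x) = (\alpha^m(x)-\beta^m(x))/(\alpha(x)-\beta(x))$. The factors $\alpha^{n-2j}(x)+\beta^{n-2j}(x)$ and $\alpha^{n-j}(x)+\beta^{n-j}(x)$ become $L_{n-2j}(x)$ and $L_{n-j}(x)$, producing the first and second sums respectively, while the closed form $2(\alpha^{n+1}(x)-\beta^{n+1}(x))/(\alpha(x)-\beta(x))$ collapses to $2F_{n+1}(x)$, the claimed right-hand side.

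I expect no genuine obstacle: this is a direct transcription of the number-theoretic argument behind Theorem~\ref{thm1}. The one point worth a remark is that in the first sum the index $n-2j$ runs negative once $j > n/2$; this is harmless because $\alpha(x)\beta(x) = -1$ forces $L_{-m}(x) = (-1)^m L_m(x)$, so the Binet form extends the Lucas polynomials to negative subscripts automatically, just as $L_{-n} = (-1)^n L_n$ does for the numbers. The hypothesis $n \ge 0$ is then needed only to make the finite sum $\sum_{j=0}^n$ well-defined.
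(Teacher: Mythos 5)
Your proposal is correct and is exactly the paper's proof: the paper likewise proves Theorem~\ref{Fibpol_thm1} by applying Lemma~\ref{main_lem} with the substitutions $x=\alpha(x)$ and $y=\beta(x)$, so that $\alpha(x)\beta(x)=-1$ and $(\alpha(x)+\beta(x))/2 = x/2$ produce the two sums and the Binet forms give $2F_{n+1}(x)$. Your additional remark on negative subscripts $L_{-m}(x)=(-1)^m L_m(x)$ is a worthwhile clarification that the paper leaves implicit.
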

\begin{proof}
Apply Lemma \ref{main_lem} inserting $x=\alpha(x)$ and $y=\beta(x)$.
\end{proof}

\begin{corollary}
For any non-negative integer $n$ we have
\begin{equation}
\sum_{j=0}^n (-1)^j Q_{n-2j} = \sum_{j=0}^n Q_{n-j} = 2 P_{n+1},
\end{equation}
where $P_n=F_n(2)$ and $Q_n=L_n(2)$ are the Pell and Pell-Lucas numbers, respectively.
\end{corollary}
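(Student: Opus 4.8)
The plan is to obtain this corollary as the special case $x = 2$ of Theorem~\ref{Fibpol_thm1}. By definition the Pell numbers are $P_n = F_n(2)$ and the Pell--Lucas numbers are $Q_n = L_n(2)$, so substituting $x = 2$ into the identity
\[
\sum_{j=0}^n (-1)^j L_{n-2j}(x) = \sum_{j=0}^n \Big(\frac{x}{2}\Big)^j L_{n-j}(x) = 2 F_{n+1}(x)
\]
immediately converts each term $L_{n-2j}(2)$ into $Q_{n-2j}$, each term $L_{n-j}(2)$ into $Q_{n-j}$, and the right-hand side $2F_{n+1}(2)$ into $2P_{n+1}$. First I would simply quote Theorem~\ref{Fibpol_thm1} and then perform this evaluation term by term.

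The one simplification worth recording is that the weight $(x/2)^j$ in the middle sum collapses: at $x = 2$ we have $(x/2)^j = 1^j = 1$ for every $j$, so the geometric factor disappears and the middle member reduces cleanly to $\sum_{j=0}^n Q_{n-j}$. This is precisely what makes $x = 2$ a natural value to single out, since it is exactly the point at which the weighted Lucas-polynomial sum degenerates into an unweighted Pell--Lucas sum.

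There is essentially no obstacle here: the result is a direct substitution, contingent only on the (already established) polynomial identity of Theorem~\ref{Fibpol_thm1} together with the standard fact that the Pell and Pell--Lucas sequences are the $x = 2$ evaluations of the Fibonacci and Lucas polynomials. The only thing to verify is that the index ranges and the sign pattern $(-1)^j$ are unaffected by setting $x = 2$, which they plainly are since neither depends on $x$.
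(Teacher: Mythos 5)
Your proof is correct and matches the paper's own proof exactly: both simply substitute $x=2$ into Theorem~\ref{Fibpol_thm1} and use $F_n(2)=P_n$, $L_n(2)=Q_n$. Your extra observation that $(x/2)^j$ collapses to $1$ at $x=2$ is a nice touch, though the paper leaves it implicit.
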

\begin{proof}
Insert $x=2$ and use $F_n(2)=P_n$ and $L_n(2)=Q_n$, respectively.
\end{proof}

\begin{remark}
We mention that a different proof of Theorem \ref{thm1} can be provided by inserting $x=L_r$, $r$ odd, and $x=i L_r$, $r$ even,
in Theorem \ref{Fibpol_thm1} and making use of
\begin{equation*}
L_n(L_r) = L_{rn}, \qquad F_n(L_r) = \frac{F_{rn}}{F_r}, \qquad r \,\,\mbox{odd},
\end{equation*}
and
\begin{equation*}
L_n(i L_r) = i^n L_{rn}, \qquad F_n(i L_r) = i^{n-1} \frac{F_{rn}}{F_r}, \qquad r \,\,\mbox{even}.
\end{equation*}
\end{remark}

\begin{theorem}\label{Fibpol_thm2}
For any non-negative integer $n$ and any $x\neq 0$ we have
\begin{equation}\label{Fibpol_id2}
\sum_{j=0}^n L_{2(n-2j)}(x) = \sum_{j=0}^n \Big (\frac{x^2+2}{2}\Big )^j L_{2(n-j)}(x) = \frac{2}{x} F_{2(n+1)}(x).
\end{equation}
\end{theorem}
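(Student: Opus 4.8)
The plan is to specialize Lemma~\ref{main_lem}, substituting $\alpha^2(x)$ and $\beta^2(x)$ for the two free arguments of $f$ (which I rename momentarily to avoid clashing with the polynomial indeterminate $x$). The substitution works precisely because squaring the Binet roots turns the indices $n-2j$, $n-j$, and $n+1$ that appear in the lemma into the even indices $2(n-2j)$, $2(n-j)$, and $2(n+1)$ demanded by the statement, thereby converting the generic identity into a statement about even-indexed Fibonacci and Lucas polynomials.

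First I would record the elementary relations $\alpha(x)\beta(x) = -1$ and $\alpha(x) + \beta(x) = x$, which follow at once from $\alpha(x)=\frac{x+\sqrt{x^2+4}}{2}$, $\beta(x)=\frac{x-\sqrt{x^2+4}}{2}$. From these I get $\alpha^2(x)\beta^2(x) = 1$, next $\alpha^2(x) + \beta^2(x) = (\alpha(x)+\beta(x))^2 - 2\alpha(x)\beta(x) = x^2 + 2$, and finally $\alpha^2(x) - \beta^2(x) = (\alpha(x)+\beta(x))(\alpha(x)-\beta(x)) = x\sqrt{x^2+4}$. I would also note $\alpha^{2k}(x) + \beta^{2k}(x) = L_{2k}(x)$ straight from the Lucas-polynomial Binet form, and $\alpha^m(x) - \beta^m(x) = \sqrt{x^2+4}\,F_m(x)$ from the Fibonacci-polynomial Binet form.

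Feeding these into the first sum of~\eqref{eq.g961w92}, the product factor $(\alpha^2\beta^2)^j = 1$ collapses entirely, leaving $\sum_{j=0}^n(\alpha^{2(n-2j)}(x)+\beta^{2(n-2j)}(x)) = \sum_{j=0}^n L_{2(n-2j)}(x)$; in the second sum the base $\frac{\alpha^2+\beta^2}{2}=\frac{x^2+2}{2}$ appears to the $j$th power multiplying $\alpha^{2(n-j)}(x)+\beta^{2(n-j)}(x)=L_{2(n-j)}(x)$, which is exactly the middle expression. For the closed form I invoke~\eqref{value}: the numerator is $2(\alpha^{2(n+1)}(x)-\beta^{2(n+1)}(x)) = 2\sqrt{x^2+4}\,F_{2(n+1)}(x)$ and the denominator is $\alpha^2(x)-\beta^2(x) = x\sqrt{x^2+4}$, so the radical cancels and I obtain $\frac{2}{x}F_{2(n+1)}(x)$, as claimed; this cancellation is exactly where the hypothesis $x\neq 0$ is used.

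There is no genuine analytic obstacle here — the argument is pure bookkeeping — so the only point requiring care is the notational collision between the polynomial variable $x$ and the dummy variables of Lemma~\ref{main_lem}, and making sure the various index shifts are tracked consistently when passing from $\alpha(x),\beta(x)$ to their squares. Everything reduces to the single observation that squaring the Binet roots maps the machinery of~\eqref{eq.g961w92} and~\eqref{value} onto even indices while sending $\alpha(x)\beta(x)=-1$ to $\alpha^2(x)\beta^2(x)=1$, which is what makes the first sum simplify so cleanly.
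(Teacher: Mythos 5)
Your proof is correct, but it follows a different route from the paper's. The paper does not return to Lemma~\ref{main_lem} at all: it deduces Theorem~\ref{Fibpol_thm2} from Theorem~\ref{Fibpol_thm1} by substituting the purely imaginary argument $i(x^2+2)$ (the paper writes $i(x^2+1)$, which appears to be a typo, as the stated identities fail already at $n=2$ otherwise) and invoking the index-doubling composition identities $L_n(i(x^2+2)) = i^n L_{2n}(x)$ and $F_n(i(x^2+2)) = i^{n-1}F_{2n}(x)/x$, after which the powers of $i$ cancel throughout. You instead specialize Lemma~\ref{main_lem} directly at the squared Binet roots $\alpha^2(x),\beta^2(x)$, using $\alpha^2(x)\beta^2(x)=1$, $\alpha^2(x)+\beta^2(x)=x^2+2$, and $\alpha^2(x)-\beta^2(x)=x\sqrt{x^2+4}$. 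Your argument is the more elementary and self-contained of the two: it needs no complex unit and no composition identities (which would themselves require proof), it makes transparent why the factor $(xy)^j$ collapses to $1$ in the first sum, and it correctly isolates the division by $x$ as the only place the hypothesis $x\neq 0$ enters. What the paper's route buys is economy within its own architecture --- it reuses Theorem~\ref{Fibpol_thm1} and showcases the imaginary-argument technique that the paper also exploits in the Remark following Theorem~\ref{Fibpol_thm1} to reprove Theorem~\ref{thm1}. Both are valid; yours is arguably the cleaner proof of this particular statement.
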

\begin{proof}
Apply Theorem \ref{Fibpol_thm1} with $x=i(x^2+1),i=\sqrt{-1},$ and use
\begin{equation*}
F_n(i(x^2+1)) = i^{n-1} \frac{F_{2n}(x)}{x}\qquad \mbox{and}\qquad  L_n(i(x^2+1)) = i^n L_{2n}(x).
\end{equation*}
\end{proof}

\begin{theorem}\label{Fibpol_thm3}
For any non-negative integer $n$, any positive integer $r$, and any $x\neq 0$ we have
\begin{equation}\label{Fibpol_id3}
2 \sum_{j=0}^n F_{r-1}^j(x) F_{r+1}^{n-j}(x)
= \sum_{j=0}^n \Big (\frac{L_r(x)}{2}\Big )^j \left ( F_{r+1}^{n-j}(x) + F_{r-1}^{n-j}(x)\right )
= 2 \frac{F_{r+1}^{n+1}(x) - F_{r-1}^{n+1}(x)}{x F_r(x)} .
\end{equation}
\end{theorem}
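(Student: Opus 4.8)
The plan is to specialize Lemma~\ref{main_lem} to the pair of Fibonacci polynomials $F_{r+1}(x)$ and $F_{r-1}(x)$, exactly as the earlier theorems specialize it to powers of $\alpha$ and $\beta$. To avoid clashing with the polynomial argument $x$, I would first restate the lemma in neutral variables $u,v$, writing \eqref{eq.g961w92} and \eqref{value} as
\begin{equation*}
\sum_{j=0}^n (uv)^j\left(u^{n-2j}+v^{n-2j}\right) = \sum_{j=0}^n \left(\frac{u+v}{2}\right)^j\left(u^{n-j}+v^{n-j}\right) = \frac{2\left(u^{n+1}-v^{n+1}\right)}{u-v},
\end{equation*}
and then set $u=F_{r+1}(x)$, $v=F_{r-1}(x)$. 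The three members of \eqref{Fibpol_id3} should then drop out of the three members above after two bookkeeping observations.

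First I would simplify the left member into the product form appearing in \eqref{Fibpol_id3}. Since $(uv)^j u^{n-2j}=u^{n-j}v^{j}$ and $(uv)^j v^{n-2j}=u^{j}v^{n-j}$, the first sum collapses to $\sum_{j=0}^n\left(u^{n-j}v^{j}+u^{j}v^{n-j}\right)$; reindexing the second piece by $j\mapsto n-j$ shows the two pieces are equal, so the whole expression equals $2\sum_{j=0}^n u^{n-j}v^{j}$. With $u=F_{r+1}(x)$, $v=F_{r-1}(x)$ this is precisely $2\sum_{j=0}^n F_{r-1}^{j}(x)F_{r+1}^{n-j}(x)$, the left-hand side of \eqref{Fibpol_id3}. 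Note that this collapse is a purely formal identity, independent of the specialization.

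Next I would match the remaining two members using two standard recurrence identities: $F_{r+1}(x)+F_{r-1}(x)=L_r(x)$ and $F_{r+1}(x)-F_{r-1}(x)=xF_r(x)$. The second is immediate from the defining recursion $F_{r+1}(x)=xF_r(x)+F_{r-1}(x)$; the first follows from the Binet forms together with $\alpha(x)\beta(x)=-1$, which gives $\alpha^2(x)+1=\alpha(x)(\alpha(x)-\beta(x))$ and the companion relation for $\beta(x)$. Substituting $u+v=L_r(x)$ turns the middle member into $\sum_{j=0}^n (L_r(x)/2)^j\left(F_{r+1}^{n-j}(x)+F_{r-1}^{n-j}(x)\right)$, while substituting $u-v=xF_r(x)$ into \eqref{value} yields the right member $2\left(F_{r+1}^{n+1}(x)-F_{r-1}^{n+1}(x)\right)/\left(xF_r(x)\right)$. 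The hypotheses $x\neq 0$ and $r\ge 1$ guarantee that $xF_r(x)$ is a nonzero rational function, so the division in \eqref{value} is legitimate.

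There is no serious obstacle here: the only content beyond the direct substitution is the formal collapse of the first sum and the verification of the two linking identities. If anything, the one point demanding a little care is the notational overloading of $x$, which is why I would carry out the whole computation in the neutral variables $u,v$ and substitute the polynomials only at the very end.
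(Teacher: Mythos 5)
Your proof is correct, and it is exactly the argument the paper intends: the paper actually states Theorem~\ref{Fibpol_thm3} with no proof at all, and your specialization of Lemma~\ref{main_lem} at $u=F_{r+1}(x)$, $v=F_{r-1}(x)$, using $F_{r+1}(x)+F_{r-1}(x)=L_r(x)$ and $F_{r+1}(x)-F_{r-1}(x)=xF_r(x)$, follows the same pattern as every other proof in the paper. Your collapse of the first sum to $2\sum_{j=0}^n u^{n-j}v^{j}$ is precisely the reformulation of the lemma that the paper itself records later as \eqref{eq.ttxukxg}, so nothing further is needed.
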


\begin{corollary}
For any $n\geq 0$ and $m\geq 1$ we have
\begin{equation}
F_m^n L_m F_{rm} \mid F_{m(r+1)}^{n+1} - F_{m(r-1)}^{n+1}, \qquad m \,\,\mbox{odd},
\end{equation}
and
\begin{equation}
F_m^n L_m F_{rm} \mid F_{m(r+1)}^{n+1} + (-1)^n F_{m(r-1)}^{n+1}, \qquad m \,\,\mbox{even}.
\end{equation}
\end{corollary}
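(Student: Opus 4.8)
The plan is to read off both divisibility statements from Theorem~\ref{Fibpol_thm3} by evaluating the polynomial identity~\eqref{Fibpol_id3} at $x=L_m$ (for $m$ odd) and at $x=iL_m$ (for $m$ even), exactly the substitutions used in the Remark following Theorem~\ref{Fibpol_thm1}. First I would cancel the common factor $2$ in~\eqref{Fibpol_id3} and clear the denominator $xF_r(x)$ to obtain the clean identity
\begin{equation*}
x F_r(x) \sum_{j=0}^n F_{r-1}^j(x) F_{r+1}^{n-j}(x) = F_{r+1}^{n+1}(x) - F_{r-1}^{n+1}(x),
\end{equation*}
which holds between polynomials with integer coefficients; in particular the sum on the left is an integer-coefficient polynomial.

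For the odd case I would substitute $x=L_m$ and use $F_k(L_m)=F_{mk}/F_m$. Each Fibonacci polynomial becomes a Fibonacci number divided by $F_m$; collecting the powers of $F_m$ and multiplying through by $F_m^{n+1}$ yields
\begin{equation*}
L_m F_{mr} \sum_{j=0}^n F_{m(r-1)}^j F_{m(r+1)}^{n-j} = F_{m(r+1)}^{n+1} - F_{m(r-1)}^{n+1}.
\end{equation*}
For the even case I would instead put $x=iL_m$ and use $F_k(iL_m)=i^{k-1}F_{mk}/F_m$. The only delicate point is the bookkeeping of powers of $i$: a common factor $i^{r(n+1)}$ is produced on the left, and it matches the right once one writes $i^{(r-2)(n+1)}=i^{r(n+1)}(-1)^{n+1}$; dividing it out (and noting $i^{-2}=-1$ inside the sum) leaves
\begin{equation*}
L_m F_{mr} \sum_{j=0}^n (-1)^j F_{m(r-1)}^j F_{m(r+1)}^{n-j} = F_{m(r+1)}^{n+1} + (-1)^n F_{m(r-1)}^{n+1}.
\end{equation*}

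Finally I would locate the extra factor $F_m^n$ that appears in the claimed divisor but not yet on the left of the two displayed identities. The key observation is that $F_m \mid F_{m(r\pm1)}$, the already-established fact that $F_a \mid F_{ab}$; hence each summand $F_{m(r-1)}^j F_{m(r+1)}^{n-j}$ is divisible by $F_m^{j} F_m^{n-j}=F_m^n$, so the whole integer sum is divisible by $F_m^n$. Multiplying the two identities by this information shows $F_m^n L_m F_{mr}$ divides $F_{m(r+1)}^{n+1}-F_{m(r-1)}^{n+1}$ when $m$ is odd and $F_{m(r+1)}^{n+1}+(-1)^n F_{m(r-1)}^{n+1}$ when $m$ is even, as asserted. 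I expect the main obstacle to be conceptual rather than computational: the factor $F_m^n$ does not come from the identity itself but must be extracted from inside the integer quotient via $F_m\mid F_{m(r\pm1)}$, and this has to be combined with the sign- and power-of-$i$ accounting that distinguishes the even case from the odd one.
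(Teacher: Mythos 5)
Your proof is correct and follows essentially the route the paper intends: substituting $x=L_m$ (for $m$ odd) and $x=iL_m$ (for $m$ even) into Theorem~\ref{Fibpol_thm3} via the identities in the Remark after Theorem~\ref{Fibpol_thm1}, with the divisibility $F_m\mid F_{m(r\pm1)}$ supplying the extra factor $F_m^n$. The only difference is organizational; you clear the denominator $xF_r(x)$ first and then re-extract $F_m^n$ from the integer sum, whereas the quotient form of \eqref{Fibpol_id3} yields the divisor $F_m^n L_m F_{rm}$ directly, with the same fact $F_m\mid F_{m(r\pm1)}$ making each summand on the other side an integer.
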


\section{Extension to Chebyshev polynomials}

Recall that, for any integer $n\geq0$, the Chebyshev polynomials $\{T_n(x)\}_{n\geq0}$ of the first kind are defined
by the second-order recurrence relation \cite{Mason}
\begin{equation}\label{T-def}
T_0(x)=1,\quad T_1(x)=x,\quad T_{n+1}(x) =2x T_n(x) - T_{n-1}(x),
\end{equation}
while the Chebyshev polynomials $\{U_n(x)\}_{n\geq0}$ of the second kind are defined by
\begin{equation}\label{U-def}
U_0(x)=1,\quad U_1(x)=2x,\quad U_{n+1}(x) = 2xU_n(x) - U_{n-1}(x).
\end{equation}
The sequences $T_n(x)$ and $U_n(x)$ have the exact (Binet) formulas
\begin{gather}
T_n (x) = \frac{1}{2} \left( (x + \sqrt {x^2 - 1} )^n + (x - \sqrt {x^2 - 1} )^n \right), \\
U_n (x) = \frac{1}{2\sqrt {x^2  - 1}} \left( (x + \sqrt {x^2 - 1} )^{n + 1} - (x - \sqrt {x^2 - 1} )^{n + 1} \right).
\end{gather}
More information about these polynomials can be found the book by Mason and Handscomb~\cite{Mason} and also in the recent articles by Frontczak and Goy \cite{Frontczak}, Fan and Chu \cite{Fan} and Adegoke et al. \cite{Adegoke}.

\begin{theorem}
For any integer $n$ we have
\begin{equation}\label{chebyshev_id}
\sum_{j=0}^n T_{n-2j}(x) = \sum_{j=0}^n x^j T_{n-j}(x) = U_n(x).
\end{equation}
\end{theorem}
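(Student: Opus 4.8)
The plan is to apply Lemma~\ref{main_lem} with the Binet roots of the Chebyshev polynomials, exactly as in the proofs of Theorems~\ref{thm1} and~\ref{Fibpol_thm1}. Introduce the abbreviations $a = x + \sqrt{x^2 - 1}$ and $b = x - \sqrt{x^2 - 1}$, and substitute the two variables of Lemma~\ref{main_lem} by $a$ and $b$. The three elementary relations that drive everything are $ab = 1$, $a + b = 2x$, and $a - b = 2\sqrt{x^2 - 1}$, together with the Binet identifications $a^k + b^k = 2\,T_k(x)$ and $(a^{n+1} - b^{n+1})/(a - b) = U_n(x)$.

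With these in hand I would translate the three members of~\eqref{eq.g961w92}--\eqref{value} one at a time. In the first sum the weight collapses because $(ab)^j = 1$, so
\begin{equation*}
\sum_{j=0}^n (ab)^j\bigl(a^{n-2j} + b^{n-2j}\bigr) = \sum_{j=0}^n \bigl(a^{n-2j} + b^{n-2j}\bigr) = 2\sum_{j=0}^n T_{n-2j}(x).
\end{equation*}
In the middle sum the weight becomes $((a+b)/2)^j = x^j$, giving $2\sum_{j=0}^n x^j\,T_{n-j}(x)$. Finally, the closed form~\eqref{value} reads $2(a^{n+1} - b^{n+1})/(a - b) = 2\,U_n(x)$. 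Dividing the resulting chain of equalities by the common factor $2$ yields~\eqref{chebyshev_id}.

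There is no serious obstacle here; the proof is a direct specialization. The only points that require a moment of care are bookkeeping ones: first, checking that $ab = 1$ really does eliminate the $(ab)^j$ factor in the leftmost expression (this is what distinguishes the Chebyshev case from the Fibonacci cases, where $\alpha\beta = \pm 1$ but the weight $(xy)^j$ instead produced the sign $(-1)^{rj}$); and second, verifying that the factor of $2$ coming from $a^k + b^k = 2T_k(x)$ appears uniformly in all three members, including the closed form via $a - b$, so that it cancels cleanly rather than leaving a stray constant. Since the stated Binet formulas hold for all integers $n$, the identity extends to every integer $n$ without modification.
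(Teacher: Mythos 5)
Your proof is correct and is essentially the paper's own argument: both apply Lemma~\ref{main_lem} with the Binet roots of the Chebyshev recurrence and divide the resulting chain of equalities by $2$. Worth noting: your substitution $a,b = x \pm \sqrt{x^2-1}$ (so that $ab=1$, consistent with the Binet formulas stated in the paper) is the right one, whereas the paper's proof text reads $x \pm \sqrt{x^2+1}$, an apparent typo, since that choice gives $ab=-1$ and would introduce spurious signs $(-1)^j$ in the leftmost sum.
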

\begin{proof}
Apply Lemma \ref{main_lem} inserting $x\mapsto x+\sqrt{x^2+1}$ and $y\mapsto x-\sqrt{x^2+1}$.
\end{proof}
Although \eqref{chebyshev_id} offers a very appealing relation we have learnt that it is not new.
It was proved by completely other methods in 1985 by Boscarol \cite{Boscarol}.

\section{Extension to the Horadam sequence}

Lemma \ref{main_lem} in the form
\begin{equation}\label{eq.ttxukxg}
f(x,y) = \sum_{j = 0}^n {x^j y^{n - j} } = \sum_{j = 0}^n \frac{{(x + y)^j }}{{2^{j + 1} }}\left( {x^{n - j}  + y^{n - j} } \right)
= \frac{{x^{n + 1} - y^{n + 1} }}{{x - y}}
\end{equation}
readily allows sum relations to be derived for the Horadam sequence and divisibility properties to be established.

Let $\{w_n(a,b;p,q)\}_{n\geq 0}$ be the Horadam sequence \cite{horadam65} defined for all non-negative integers~$n$ by the recurrence
\begin{equation}\label{eq.vhrb5b3}
w_0 = a,\,\,\,w_1 = b;\quad w_n = pw_{n - 1} - qw_{n - 2},\quad n \ge 2,
\end{equation}
where $a$, $b$, $p$ and $q$ are arbitrary complex numbers, with $p\ne 0$ and $q\ne 0$. Extension of the definition of $w_n(a,b;p,q)$ to negative subscripts is provided by writing the recurrence relation as
$$w_{-n} = \frac{1}{q} (p w_{-n+1} - w_{-n+2})$$
where, for brevity, we wrote (and will write) $w_n$ for $w_n(a,b;p,q)$. \\

Two important cases of $w_n$ are the Lucas sequences of the first kind, $u_n(p,q)=w_n(0,1;p,q)$,
and of the second kind, $v_n(p,q)=w_n(2,p;p,q)$. The most well-known Lucas sequences are the Fibonacci sequence $F_n=u_n(1,-1)$ and the sequence of Lucas numbers $L_n=v_n(1,-1)$.

The Binet formulas for sequences $u_n$, $v_n$ and $w_n$ in the non-degenerate case, $p^2 - 4q > 0$, are
\begin{equation}\label{bine.uvw}
u_n = \frac{\tau^n - \sigma^n}{\sqrt{p^2 - 4q}}=\frac{\tau^n - \sigma^n}{\Delta},\qquad v_n = \tau^n + \sigma^n, \qquad w_n = A\tau^n + B\sigma^n,
\end{equation}
with
$\displaystyle A=\frac{b - a\sigma}{\Delta}$ and $B\displaystyle=\frac{a\tau  - b}{\Delta}$,
where
$$
\tau = \tau(p,q) = \frac{p+\Delta}{2}\text{ and } \sigma = \sigma(p,q) = \frac{p-\Delta}{2}
$$
are the distinct zeros of the characteristic polynomial $x^2-px+q$ of the Horadam sequence.

In this section, we will make use of the following known results.
\begin{lemma}\label{lem.jv8c0fd}
If $a$, $b$, $c$ and $d$ are rational numbers and $\lambda$ is an irrational number, then
\begin{equation*}
a + b\,\lambda=c + d\,\lambda\quad \iff\quad a=c,\,\,\, b=d.
\end{equation*}
\end{lemma}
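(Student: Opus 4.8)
The plan is to prove the two implications of the biconditional separately, with almost all of the content residing in the forward direction. The reverse implication is immediate: if $a=c$ and $b=d$, then substituting gives $a+b\lambda = c+d\lambda$ with no assumption on $\lambda$ needed at all, so I would dispose of it in a single line.

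For the forward direction, I would start from $a+b\lambda = c+d\lambda$ and rearrange it into the key form $a-c = (d-b)\lambda$. The idea is that the left-hand side is rational (a difference of rationals), while the right-hand side involves the irrational $\lambda$, and these can only be reconciled if the coefficient of $\lambda$ vanishes. To make this rigorous I would argue by contradiction on the assumption $b\neq d$: if $d-b\neq 0$, then I can solve for $\lambda = (a-c)/(d-b)$, exhibiting $\lambda$ as a quotient of two rationals with nonzero denominator, hence itself rational. This contradicts the hypothesis that $\lambda$ is irrational. Therefore $d=b$, i.e., $b=d$.

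Once $b=d$ is established, the equation $a-c=(d-b)\lambda$ collapses to $a-c=0$, giving $a=c$, which completes the forward direction and hence the lemma.

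The only point requiring any care is the ordering of the case analysis: I must \emph{first} rule out $b\neq d$ before attempting to equate the rational parts, since dividing by $d-b$ is only legitimate when that quantity is nonzero. Apart from this bookkeeping there is no genuine obstacle; the lemma is simply the statement that $\{1,\lambda\}$ is linearly independent over $\mathbb{Q}$ whenever $\lambda$ is irrational, and the proof is entirely elementary. In the sequel this lemma will be applied with $\lambda=\sqrt{5}$ (or $\lambda=\Delta$ for the Horadam case) to legitimately compare rational and irrational parts when separating identities into their Fibonacci and Lucas components.
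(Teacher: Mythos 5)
Your proof is correct and complete: rearranging to $a-c=(d-b)\lambda$, ruling out $d\neq b$ by the contradiction $\lambda=(a-c)/(d-b)\in\mathbb{Q}$, and then concluding $a=c$, together with the trivial converse, is the standard argument that $\{1,\lambda\}$ is linearly independent over $\mathbb{Q}$. The paper states this lemma as a known auxiliary fact and provides no proof at all, so your write-up simply supplies the elementary justification the paper omits; there is no gap, and your closing remark about applying it with $\lambda=\sqrt{5}$ (or $\Delta$) matches exactly how the paper uses the lemma to compare coefficients when separating identities into Fibonacci and Lucas parts.
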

\begin{lemma}\label{lem.wiwyib1}
For any integer $s$,
\begin{align}
&q^s + \tau ^{2s} = \tau ^s v_s, \qquad q^s - \tau ^{2s} = - \Delta\tau ^s u_s, \label{eq.u9uuagc} \\
&q^s + \sigma ^{2s} = \sigma ^s v_s, \qquad q^s - \sigma ^{2s} = \Delta\sigma ^s u_s. \label{eq.dkvfyiz}
\end{align}
In particular,
\begin{align}
&( - 1)^s + \alpha ^{2s} = \alpha ^s L_s,\qquad  ( - 1)^s - \alpha ^{2s} = - \sqrt 5\alpha ^s F_s, \label{eq.ozz3zp6} \\
&( - 1)^s + \beta ^{2s} = \beta ^s L_s,\qquad ( - 1)^s - \beta ^{2s} = \sqrt 5\beta ^s F_s \label{eq.syrjcay}.
\end{align}
\end{lemma}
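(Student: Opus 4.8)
The plan is to reduce everything to the single Vieta relation $\tau\sigma = q$ together with the Binet formulas \eqref{bine.uvw}. Since $\tau$ and $\sigma$ are the two roots of $x^2 - px + q$, their product equals the constant term, so $\tau\sigma = q$; because $q \neq 0$, both roots are nonzero, which guarantees that $\tau^s$, $\sigma^s$ and $q^s$ are well defined for every integer $s$ (including negative ones) and that the Binet representations of $u_s$ and $v_s$ persist there.

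First I would write $q^s = (\tau\sigma)^s = \tau^s\sigma^s$. Substituting this into the left-hand side of the first identity and factoring out $\tau^s$ gives
\[
q^s + \tau^{2s} = \tau^s\sigma^s + \tau^{2s} = \tau^s(\sigma^s + \tau^s) = \tau^s v_s,
\]
where the last step invokes $v_s = \tau^s + \sigma^s$. The same factoring, now with a minus sign, yields
\[
q^s - \tau^{2s} = \tau^s(\sigma^s - \tau^s) = -\tau^s(\tau^s - \sigma^s) = -\Delta\,\tau^s u_s,
\]
using $\tau^s - \sigma^s = \Delta u_s$. The two $\sigma$-identities in \eqref{eq.dkvfyiz} follow by the identical computation, factoring out $\sigma^s$ instead of $\tau^s$; the only change is that factoring $\tau^s\sigma^s - \sigma^{2s}$ leaves $\sigma^s(\tau^s - \sigma^s) = \Delta\,\sigma^s u_s$ directly, so the sign in the difference identity comes out $+\Delta$ rather than $-\Delta$.

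Finally, the particular cases \eqref{eq.ozz3zp6}--\eqref{eq.syrjcay} are obtained by specialising $(p,q) = (1,-1)$, for which $\tau = \alpha$, $\sigma = \beta$, $\Delta = \sqrt5$, $u_s = F_s$, $v_s = L_s$, and $q^s = (-1)^s$. I do not anticipate any real obstacle here: the whole lemma is a one-line factoring. The only point deserving a word of care is the claim for negative $s$, which is settled once one notes that $q \neq 0$ forces $\tau,\sigma \neq 0$, so that all the powers and the Binet formulas remain valid across all of $\mathbb{Z}$.
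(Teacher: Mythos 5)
Your proof is correct and coincides with the paper's (implicit) justification: the paper states this lemma without proof as a known result, and the intended argument---as with the neighbouring lemmas dismissed as ``immediate consequences of the Binet formulas''---is exactly your one-line factoring of $q^s=(\tau\sigma)^s$ combined with the Binet forms $u_s=(\tau^s-\sigma^s)/\Delta$ and $v_s=\tau^s+\sigma^s$. Your added remark that $q\neq 0$ forces $\tau,\sigma\neq 0$, so the identities hold for negative $s$ as well, is a sound (and welcome) extra precaution.
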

\begin{lemma}\label{lem.ydalnfx}
Let $r$ and $s$ be any integers. Then
\begin{align}
&v_{r + s} - \tau ^r v_s  =  - \Delta\sigma ^s  u_r \label{eq.j428hfx},\\
&v_{r + s} - \sigma ^r v_s  = \Delta \tau ^s u_r \label{eq.cqli6xc},\\
&u_{r + s} - \tau ^r u_s  = \sigma ^s u_r\label{eq.iamiky1},\\
&u_{r + s} - \sigma ^r u_s  = \tau ^s u_r\label{eq.zy0gfyn}.
\end{align}
In particular, \cite{Hoggatt},
\begin{align}
&L_{r + s} - L_r \alpha ^s = - \sqrt 5 \beta ^r F_s,\qquad  L_{r + s} - L_r \beta ^s = \sqrt 5 \alpha ^r F_s, \label{es.benssj4} \\
&F_{r + s} - F_r \alpha ^s = \beta ^r F_s,\qquad\qquad\,\, F_{r + s} - F_r \beta ^s = \alpha ^r F_s. \label{eq.pvdw5ja}
\end{align}
\end{lemma}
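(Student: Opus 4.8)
The plan is to prove all four identities directly from the Binet forms $v_n=\tau^n+\sigma^n$ and $u_n=(\tau^n-\sigma^n)/\Delta$, using only the two facts $\tau\sigma=q$ and $\tau-\sigma=\Delta$ about the roots of $x^2-px+q$. Since each claimed identity is linear in the $v$'s and $u$'s, substituting the closed forms should collapse both sides to the same expression in $\tau$ and $\sigma$; no induction or recurrence manipulation is needed.

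The common mechanism is the elementary cancellation
\begin{equation*}
\zeta^{r+s}-\xi^{\,r}\zeta^{\,s}=\zeta^{\,s}\bigl(\zeta^{\,r}-\xi^{\,r}\bigr),
\end{equation*}
applied with $\{\xi,\zeta\}=\{\tau,\sigma\}$ in either order. For instance, to establish \eqref{eq.j428hfx} I would expand
\begin{equation*}
v_{r+s}-\tau^r v_s=(\tau^{r+s}+\sigma^{r+s})-\tau^r(\tau^s+\sigma^s)=\sigma^{r+s}-\tau^r\sigma^s=\sigma^{\,s}(\sigma^r-\tau^r),
\end{equation*}
and then recognize $\sigma^r-\tau^r=-\Delta u_r$ to obtain $-\Delta\sigma^s u_r$. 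Identities \eqref{eq.cqli6xc}, \eqref{eq.iamiky1}, and \eqref{eq.zy0gfyn} follow the same pattern: in each the two ``diagonal'' exponential terms cancel, the surviving factor is $\tau^r-\sigma^r$ or $\sigma^r-\tau^r$, and dividing by $\Delta$ where appropriate identifies it as $\pm u_r$.

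The particular Fibonacci--Lucas forms \eqref{es.benssj4} and \eqref{eq.pvdw5ja} then follow by setting $p=1$, $q=-1$, so that $\tau=\alpha$, $\sigma=\beta$, $\Delta=\sqrt5$, $u_n=F_n$, and $v_n=L_n$. The one point requiring care is that these specializations read $L_r\alpha^s$, not $\alpha^r L_s$: the general identities are \emph{not} symmetric in $r$ and $s$, so the stated special cases are obtained by first interchanging the roles of $r$ and $s$ and only then substituting. Beyond this relabeling and the attendant sign bookkeeping, there is no real obstacle --- the whole lemma is a routine consequence of the Binet formulas.
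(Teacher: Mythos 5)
Your proof is correct and takes essentially the paper's (implicit) route: the paper states this lemma without proof as a known result, and---just as with the companion Lemma~\ref{lem.dsfre22}, which it dismisses as an immediate consequence of the Binet formulas---direct substitution of the Binet forms is exactly the intended argument. Your attention to interchanging $r$ and $s$ before specializing to $p=1$, $q=-1$ (so that $L_r\alpha^s$ rather than $\alpha^r L_s$ appears in \eqref{es.benssj4} and \eqref{eq.pvdw5ja}) is a correct and necessary piece of bookkeeping.
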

\begin{lemma}\label{lem.w65xm59}
For any integer $n$,
\begin{align}
&A\tau ^n - B\sigma ^n = \frac{{w_{n + 1} - qw_{n - 1} }}{\Delta }\label{eq.p951mmh},\\
&A\sigma ^n + B\tau ^n = q^n w_{-n}\label{eq.trp91mj}.
\end{align}
\end{lemma}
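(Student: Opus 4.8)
The plan is to establish both identities by direct substitution of the Binet representation $w_n = A\tau^n + B\sigma^n$, using only the elementary relations between the characteristic roots, namely $\tau\sigma = q$ and $\tau - \sigma = \Delta$. No machinery beyond this is needed, so once the Binet form is in hand the two identities reduce to bookkeeping.

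For \eqref{eq.p951mmh}, I would expand the numerator on the right-hand side using the Binet form at indices $n+1$ and $n-1$:
\begin{equation*}
w_{n+1} - q w_{n-1} = A\tau^{n+1} + B\sigma^{n+1} - q\left(A\tau^{n-1} + B\sigma^{n-1}\right).
\end{equation*}
Substituting $q = \tau\sigma$ turns $q\tau^{n-1}$ into $\sigma\tau^{n}$ and $q\sigma^{n-1}$ into $\tau\sigma^{n}$, so that the $A$-terms collect to $A\tau^{n}(\tau - \sigma)$ and the $B$-terms to $B\sigma^{n}(\sigma - \tau)$. Factoring $\tau - \sigma = \Delta$ gives $w_{n+1} - q w_{n-1} = \Delta\,(A\tau^{n} - B\sigma^{n})$, and division by $\Delta$ yields \eqref{eq.p951mmh}.

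For \eqref{eq.trp91mj}, I would read off the Binet form at the negative index, $w_{-n} = A\tau^{-n} + B\sigma^{-n}$, and multiply by $q^{n}$. Since $q^{n} = (\tau\sigma)^{n} = \tau^{n}\sigma^{n}$, the product $q^{n}\tau^{-n}$ collapses to $\sigma^{n}$ and $q^{n}\sigma^{-n}$ to $\tau^{n}$, giving $q^{n} w_{-n} = A\sigma^{n} + B\tau^{n}$ immediately.

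The only point requiring care — and the closest thing to an obstacle — is the legitimacy of invoking the Binet form at negative subscripts in the second identity. Here I would pause to justify it once: the recurrence \eqref{eq.vhrb5b3}, extended to negative indices as in the definition of $w_{-n}$, uniquely determines the two-sided sequence, and because $\tau$ and $\sigma$ are nonzero (as $q \neq 0$) the expression $A\tau^{n} + B\sigma^{n}$ satisfies that recurrence for every integer $n$ and matches $w_0,w_1$; hence it coincides with $w_n$ throughout $\mathbb{Z}$. Working under the standing non-degenerate assumption $p^2 - 4q > 0$ for which the Binet formulas \eqref{bine.uvw} are stated, both computations above are then purely formal.
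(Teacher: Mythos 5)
Your proof is correct and is essentially the route the paper takes: the paper simply cites \cite[Lemma 1]{adegoke21} for \eqref{eq.p951mmh} and asserts that \eqref{eq.trp91mj} is ``a consequence of the Binet formula,'' and both of those amount to exactly the Binet-substitution computations (using $\tau\sigma=q$ and $\tau-\sigma=\Delta$) that you carry out in full. Your write-up has the modest advantage of being self-contained, including the justification for applying the Binet form at negative subscripts, which the paper leaves implicit.
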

\begin{proof}
See \cite[Lemma 1]{adegoke21} for a proof of \eqref{eq.p951mmh}. Identity \eqref{eq.trp91mj} is a consequence of the Binet formula.
\end{proof}
\begin{lemma}\label{lem.dsfre22}
The following identities hold for integers $n$, $m$ and $r$:
\begin{equation}\label{eq.c1ni4vp}
\tau ^r u_{m - s}  = \tau ^m u_{r - s}  - q^{m - s} \tau ^s u_{r - m}\,,
\end{equation}
\begin{equation}
\sigma ^r u_{m - s}  = \sigma ^m u_{r - s}  - q^{m - s} \sigma ^s u_{r - m}\,,
\end{equation}
\begin{equation}\label{eq.gbxb02o}
\tau ^r u_{m - s} \Delta = \tau ^m v_{r - s}  - q^{m - s} \tau ^s v_{r - m}
\end{equation}
amd
\begin{equation}
\sigma ^r u_{m - s} \Delta =  - \sigma ^m v_{r - s}  + q^{m - s} \sigma ^s v_{r - m}\,.
\end{equation}
\end{lemma}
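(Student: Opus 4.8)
The plan is to prove all four identities by the most direct route available: substitute the Binet formulas \eqref{bine.uvw} on both sides, reduce each identity to a Laurent-polynomial identity in $\tau$ and $\sigma$, and then harvest the two $\sigma$-versions from the two $\tau$-versions by a symmetry argument. The only algebraic input beyond the Binet forms is the relation $\tau\sigma = q$ (the product of the roots of $x^2 - px + q$), which is what makes the cross terms collapse.

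First I would establish \eqref{eq.c1ni4vp}. Multiplying through by $\Delta$ and inserting $u_n = (\tau^n - \sigma^n)/\Delta$, the left-hand side becomes $\tau^r(\tau^{m-s} - \sigma^{m-s}) = \tau^{r+m-s} - \tau^r\sigma^{m-s}$. The crucial step is to rewrite the ``error term'' on the right using $\tau\sigma = q$, which yields $q^{m-s}\tau^s = \tau^m\sigma^{m-s}$; with this, $q^{m-s}\tau^s u_{r-m}\Delta$ collapses to $\tau^r\sigma^{m-s} - \tau^m\sigma^{r-s}$, whereas $\tau^m u_{r-s}\Delta = \tau^{m+r-s} - \tau^m\sigma^{r-s}$. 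Subtracting, the two copies of $\tau^m\sigma^{r-s}$ cancel and the right-hand side reduces to $\tau^{m+r-s} - \tau^r\sigma^{m-s}$, matching the left. This proves \eqref{eq.c1ni4vp}.

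The mixed identity \eqref{eq.gbxb02o} then follows from the same computation. Since $\Delta u_n = \tau^n - \sigma^n$ and $v_n = \tau^n + \sigma^n$ differ only in the sign of the $\sigma$-power, passing from \eqref{eq.c1ni4vp} (multiplied by $\Delta$) to \eqref{eq.gbxb02o} introduces on the right exactly the two extra terms $2\tau^m\sigma^{r-s}$ and $2q^{m-s}\tau^s\sigma^{r-m}$; the relation $q^{m-s}\tau^s = \tau^m\sigma^{m-s}$ shows these two terms coincide, so their contribution cancels and one is left precisely with $\tau^r u_{m-s}\Delta = \tau^m v_{r-s} - q^{m-s}\tau^s v_{r-m}$. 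Equivalently, one may simply insert $v_{r-s} = \tau^{r-s}+\sigma^{r-s}$ and repeat the bookkeeping.

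Finally, the two $\sigma$-identities require no new work, by the symmetry $\tau \leftrightarrow \sigma$. Under this interchange $q = \tau\sigma$ and $v_n = \tau^n+\sigma^n$ are unchanged, $u_n = (\tau^n-\sigma^n)/\Delta$ is unchanged (numerator and $\Delta = \tau - \sigma$ both flip sign), while $\Delta$ itself changes sign. Applying the swap to \eqref{eq.c1ni4vp} reproduces the second identity verbatim, and applying it to \eqref{eq.gbxb02o} converts the $+\Delta$ on the left into $-\Delta$, which after multiplying by $-1$ yields the fourth identity with its $-\sigma^m v_{r-s} + q^{m-s}\sigma^s v_{r-m}$ pattern. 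The only thing demanding care throughout is the exponent bookkeeping when applying $\tau\sigma = q$; there is no conceptual obstacle. Since the Binet formulas hold for all integer subscripts via the negative-index extension, the identities are valid for arbitrary integer indices $r$, $m$, $s$.
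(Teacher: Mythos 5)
Your proposal is correct and takes essentially the same route as the paper: the paper's entire proof is that the identities are ``immediate consequences of the Binet formulas,'' and your argument (substitute $u_n = (\tau^n-\sigma^n)/\Delta$, $v_n = \tau^n+\sigma^n$, collapse cross terms via $\tau\sigma = q$, and obtain the $\sigma$-versions by the $\tau\leftrightarrow\sigma$ swap with $\Delta\mapsto-\Delta$) is precisely that verification written out in full.
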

\begin{proof}
These are immediate consequences of the Binet formulas.
\end{proof}
\begin{lemma}
If $m$ and $n$ are integers, then \cite{Adegoke19}
\begin{align}
u_{n + m}  - q^m u_{n - m} = u_mv_n\label{eq.kb3hsvs},\\
v_{n + m}  - q^m v_{n - m}=\Delta^2u_m u_n \label{eq.nutjauf},\\
u_{n + m}  + q^m u_{n - m}= v_m u_n\label{eq.ciqxfvx} ,
\end{align}
and
\begin{equation}\label{eq.u8cxhf1}
v_{n + m}  + q^m v_{n - m}=v_m v_n.
\end{equation}
\end{lemma}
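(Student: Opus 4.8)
The plan is to prove all four identities by direct substitution of the Binet formulas \eqref{bine.uvw}, using the single algebraic fact that $\tau$ and $\sigma$ are the roots of the characteristic polynomial $x^2 - px + q$, so by Vieta's relations $\tau\sigma = q$ and hence $q^m = (\tau\sigma)^m = \tau^m\sigma^m$ for every integer $m$. This last observation is exactly what is needed to absorb the factor $q^m$ appearing in the terms $q^m u_{n-m}$ and $q^m v_{n-m}$, converting mixed powers of $\tau$ and $\sigma$ into the cross terms that occur on the right-hand sides.

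To illustrate the mechanism, I would treat \eqref{eq.kb3hsvs} first. Writing $q^m u_{n-m} = \tau^m\sigma^m(\tau^{n-m}-\sigma^{n-m})/\Delta = (\tau^n\sigma^m - \tau^m\sigma^n)/\Delta$, I obtain
\begin{equation*}
u_{n+m} - q^m u_{n-m} = \frac{\tau^{n+m} - \sigma^{n+m} - \tau^n\sigma^m + \tau^m\sigma^n}{\Delta},
\end{equation*}
while expanding the claimed right-hand side gives
\begin{equation*}
u_m v_n = \frac{(\tau^m - \sigma^m)(\tau^n + \sigma^n)}{\Delta} = \frac{\tau^{m+n} + \tau^m\sigma^n - \tau^n\sigma^m - \sigma^{m+n}}{\Delta},
\end{equation*}
and the two expressions coincide term by term. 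The remaining three identities follow by the same two-line computation: \eqref{eq.nutjauf} from $v_{n+m} - q^m v_{n-m}$ matched against $\Delta^2 u_m u_n = (\tau^m-\sigma^m)(\tau^n-\sigma^n)$, where the two factors of $\Delta$ in the denominator of $u_m u_n$ are cancelled by $\Delta^2$; \eqref{eq.ciqxfvx} from $u_{n+m} + q^m u_{n-m}$ matched against $v_m u_n$; and \eqref{eq.u8cxhf1} from $v_{n+m} + q^m v_{n-m}$ matched against $v_m v_n$. In each case the sign in front of $q^m$ dictates whether the cross terms $\tau^n\sigma^m$ and $\tau^m\sigma^n$ add or cancel, producing a $u$-type or a $v$-type factor on the right.

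There is essentially no hard step here; the only point requiring a moment of care is the claim that the Binet formulas, and hence the computation above, remain valid for all integers $m$ and $n$, including negative ones, as asserted in the lemma. This is guaranteed because the hypothesis $q \ne 0$ forces $\tau\sigma = q \ne 0$, so $\tau$ and $\sigma$ are nonzero and their integer powers $\tau^k$ and $\sigma^k$ are well defined for negative $k$; the relation $q^m = \tau^m\sigma^m$ then persists for negative $m$ as well, and every manipulation above is a finite algebraic rearrangement valid over the field generated by $\tau$ and $\sigma$. Consequently the four identities hold as stated for arbitrary integers $m$ and $n$.
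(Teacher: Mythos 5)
Your proof is correct and matches the approach the paper intends: the paper states this lemma with only a citation to \cite{Adegoke19} and disposes of the neighboring lemma of the same type with the remark that such identities are ``immediate consequences of the Binet formulas,'' which is precisely the direct Binet substitution (using $\tau\sigma=q$ to absorb the $q^m$ factors) that you carry out. Your additional care about negative indices, justified by $\tau\sigma=q\neq 0$, is a small bonus beyond what the paper records but is consistent with its extension of the sequences to negative subscripts.
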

In the next result we give a generalization of Theorem~\ref{thm1}.
\begin{theorem}
If $r$, $n$ and $t$ are any integers, then
\begin{equation}\label{eq.eapqwai}
\begin{split}
&\sum_{j = 0}^n {q^{rj} w_{r(n - 2j) + t} }  = w_t \sum_{j = 0}^n {\frac{{v_r^j v_{r(n - j)} }}{{2^{j + 1} }}} \\
&= \frac{{w_{t + 1 + r(n + 1)}  - q^{r(n + 1)} w_{t + 1 - r(n + 1)} }}{{u_r \Delta ^2 }} - \frac{{q\left( {w_{t - 1 + r(n + 1)}  - q^{r(n + 1)} w_{t - 1 - r(n + 1)} } \right)}}{{u_r \Delta ^2 }}
\end{split}
\end{equation}
\end{theorem}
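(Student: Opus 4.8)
The plan is to specialize the geometric-series form \eqref{eq.ttxukxg} of Lemma~\ref{main_lem} at $x=\tau^r$, $y=\sigma^r$ and then to bridge the resulting $u,v$-expression to the $w_t$-weighted sums through the Binet formula \eqref{bine.uvw}. Since $\tau\sigma=q$ and $\tau-\sigma=\Delta$, this substitution at once yields $x+y=v_r$, $x^{n-j}+y^{n-j}=v_{r(n-j)}$, and $(x^{n+1}-y^{n+1})/(x-y)=(\tau^{r(n+1)}-\sigma^{r(n+1)})/(\tau^r-\sigma^r)=u_{r(n+1)}/u_r$. Hence the middle member of \eqref{eq.eapqwai}, stripped of the factor $w_t$, is exactly \eqref{eq.ttxukxg} read as $\sum_{j=0}^n v_r^j v_{r(n-j)}/2^{j+1}=u_{r(n+1)}/u_r$. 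This settles the equality between the second expression and $w_t\,u_{r(n+1)}/u_r$, and my strategy is to show that all three members of \eqref{eq.eapqwai} reduce to this common value.

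For the first equality I would expand $w_{r(n-2j)+t}=A\tau^{r(n-2j)+t}+B\sigma^{r(n-2j)+t}$ and exploit the factorization $q^{rj}=\tau^{rj}\sigma^{rj}$. A short exponent count gives $q^{rj}\tau^{r(n-2j)+t}=\tau^t(\tau^r)^{n-j}(\sigma^r)^{j}$ and, symmetrically, $q^{rj}\sigma^{r(n-2j)+t}=\sigma^t(\tau^r)^{j}(\sigma^r)^{n-j}$, so the left sum splits as $A\tau^t\sum_{j=0}^n(\tau^r)^{n-j}(\sigma^r)^{j}+B\sigma^t\sum_{j=0}^n(\tau^r)^{j}(\sigma^r)^{n-j}$. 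Both inner sums are one and the same finite geometric series (read in opposite orders) and each equals $u_{r(n+1)}/u_r$, so the whole collapses to $(A\tau^t+B\sigma^t)\,u_{r(n+1)}/u_r=w_t\,u_{r(n+1)}/u_r$, matching the middle expression. The only delicate point here is the exponent bookkeeping after inserting $q^{rj}=\tau^{rj}\sigma^{rj}$, which converts the alternating-shift sum into two ordinary geometric sums.

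The last equality is the step I expect to require the most care. After clearing $u_r\Delta^2$ it suffices, with $m=r(n+1)$, to prove $(w_{t+1+m}-q^mw_{t+1-m})-q(w_{t-1+m}-q^mw_{t-1-m})=\Delta^2 w_t u_m$. A direct Binet computation gives, for every $s$, the factorization $w_{s+m}-q^mw_{s-m}=(\tau^m-\sigma^m)(A\tau^s-B\sigma^s)$ (the factor $A\tau^s-B\sigma^s$ being precisely $(w_{s+1}-qw_{s-1})/\Delta$ by \eqref{eq.p951mmh}), so the bracketed combination becomes $(\tau^m-\sigma^m)\big[(A\tau^{t+1}-B\sigma^{t+1})-q(A\tau^{t-1}-B\sigma^{t-1})\big]$. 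Here I would invoke $q=\tau\sigma$ to rewrite $\tau^2-q=\tau\Delta$ and $\sigma^2-q=-\sigma\Delta$; the inner bracket then simplifies to $\Delta(A\tau^t+B\sigma^t)=\Delta w_t$, and together with $\tau^m-\sigma^m=\Delta u_m$ this produces exactly $\Delta^2 w_t u_m$. Dividing back by $u_r\Delta^2$ returns $w_t\,u_{r(n+1)}/u_r$, closing the chain of equalities.
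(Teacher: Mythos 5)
Your proposal is correct and follows essentially the same route as the paper: both specialize \eqref{eq.ttxukxg} at $(x,y)=(\tau^r,\sigma^r)$ and its swap, weight the two copies by the Binet coefficients $A\tau^t$ and $B\sigma^t$, and identify the closed form through \eqref{eq.p951mmh}. The only cosmetic difference is at the very end, where you verify the bracket $(A\tau^{t+1}-B\sigma^{t+1})-q(A\tau^{t-1}-B\sigma^{t-1})=\Delta w_t$ directly from $\tau^2-q=\tau\Delta$ and $\sigma^2-q=-\sigma\Delta$, whereas the paper cites Horadam's identity $w_{r+s}+q^s w_{r-s}=v_s w_r$ to collapse $w_{t+2}-2qw_t+q^2w_{t-2}$ to $\Delta^2 w_t$ --- an immaterial variation.
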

\begin{proof}
Set $(x,y)=(\tau^r,\sigma^r)$ and $(x,y)=(\sigma^r,\tau^r)$, in turn in~\eqref{eq.ttxukxg} and use the Binet formulas and Lemma~\ref{lem.w65xm59}. Note also the use of~\cite[Equation (3.16)]{horadam65}:
\begin{equation}
w_{r + s} + q^sw_{r - s} = v_s w_r.
\end{equation}
\end{proof}

\begin{corollary}
If $r$ and $n$ are any integers, then
\begin{equation}
\sum_{j = 0}^n q^{rj} u_{r(n - 2j)} = 0.
\end{equation}
\end{corollary}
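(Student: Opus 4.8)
The plan is to obtain this identity as a direct specialization of the preceding theorem rather than to prove it from scratch. The Lucas sequence of the first kind is the Horadam sequence $u_n = w_n(0,1;p,q)$, so I would set $w_n = u_n$ and $t = 0$ in \eqref{eq.eapqwai}. The key observation is that the middle member of the chain of equalities in that theorem carries the global factor $w_t$, and here $w_t = u_0 = 0$. Hence the left-hand side $\sum_{j=0}^n q^{rj} u_{r(n-2j)}$ must vanish, which is exactly the claim.

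First I would confirm that the specialization is legitimate: the theorem is stated for an arbitrary integer $t$ and for a general Horadam sequence, and $u_n$ is Horadam with parameters $a=0$, $b=1$, i.e. $A = 1/\Delta$ and $B = -1/\Delta$. Second I would record that with $t=0$ the factor becomes $w_t = A + B = u_0 = 0$, so the middle expression $w_t \sum_{j=0}^n v_r^j v_{r(n-j)}/2^{j+1}$ is identically zero. Note that this middle expression involves no division by $u_r$, so the conclusion is valid for every integer $r$, including those with $u_r = 0$; only the final fractional form of \eqref{eq.eapqwai} would require $u_r \neq 0$, and we do not need it. This is essentially the whole proof, and there is no real obstacle along this route.

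Should one instead want a self-contained argument, I would exploit the symmetry of the summand under $j \mapsto n-j$ together with the reflection formula $u_{-m} = -q^{-m} u_m$, which is immediate from the Binet form and $\tau\sigma = q$. Writing $S = \sum_{j=0}^n q^{rj} u_{r(n-2j)}$ and reindexing by $k = n-j$ turns $S$ into $\sum_{k=0}^n q^{r(n-k)} u_{r(2k-n)}$; applying the reflection formula with $m = r(n-2k)$ then collapses the powers of $q$ via $r(n-k) - r(n-2k) = rk$ and produces exactly $-S$, so $S = -S$ and hence $S = 0$. The only point to watch here is the bookkeeping of the exponents of $q$ under the reflection, which is where a sign or a shift is easiest to misplace; the specialization route avoids this entirely and is the one I would present as the proof.
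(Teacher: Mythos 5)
Your proposed specialization is exactly the paper's route: the corollary is meant to follow from \eqref{eq.eapqwai} by taking $w_n = u_n$ and $t = 0$, so that the factor $w_t = u_0 = 0$ annihilates the middle member, and your observation that this equality needs no division by $u_r$ (only the final fractional form does) is a correct and worthwhile refinement. Your alternative self-contained argument via $j \mapsto n-j$ and the reflection formula $u_{-m} = -q^{-m}u_m$ is also correct, but the specialization is the intended proof.
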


\begin{corollary}
If $n$ is any integer, then
\begin{equation}
\sum_{j = 0}^n q^{j} v_{n - 2j} = \sum_{j=0}^n \left (\frac{p}{2}\right )^j v_{n-j} = 2 u_{n+1}.
\end{equation}
\end{corollary}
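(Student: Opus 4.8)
The plan is to obtain this corollary as a direct specialization, and the cleanest route is to return to the master identity rather than to route through the preceding Horadam theorem. I would apply Lemma~\ref{main_lem} in its original form~\eqref{eq.g961w92}--\eqref{value} with the substitution $x=\tau$ and $y=\sigma$, the characteristic roots of the Horadam recurrence. Under this choice one has $xy=\tau\sigma=q$, $x+y=\tau+\sigma=p$, and $x-y=\tau-\sigma=\Delta$, so the three quantities appearing in the corollary are exactly the three members of Lemma~\ref{main_lem}.

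Concretely, the left member of~\eqref{eq.g961w92} becomes $\sum_{j=0}^n (\tau\sigma)^j(\tau^{n-2j}+\sigma^{n-2j})=\sum_{j=0}^n q^j v_{n-2j}$ by the Binet form $v_m=\tau^m+\sigma^m$; the middle member becomes $\sum_{j=0}^n (p/2)^j v_{n-j}$; and the value~\eqref{value} becomes $2(\tau^{n+1}-\sigma^{n+1})/(\tau-\sigma)=2u_{n+1}$ directly from the Binet form $u_m=(\tau^m-\sigma^m)/\Delta$. All three pieces of the corollary then fall out simultaneously, with no residual simplification required.

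I would also record the alternative derivation as a specialization of the preceding Horadam theorem~\eqref{eq.eapqwai}, taking $w_n=v_n$ (i.e.\ $a=2$, $b=p$), $r=1$, and $t=0$, the last choice being harmless since $v_0=2$ makes the prefactor $w_t$ purely cosmetic. This route is less clean: the right-hand side of~\eqref{eq.eapqwai} specializes to $(v_{n+2}-q^{n+1}v_{-n}-qv_n+q^{n+2}v_{-n-2})/\Delta^2$, and one must still invoke the symmetry $v_{-m}=q^{-m}v_m$ together with the collapse $v_{n+2}-qv_n=\Delta^2 u_{n+1}$ in order to recover $2u_{n+1}$.

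The only genuine subtlety, common to both routes, is Binet-form bookkeeping: the identity is asserted for \emph{all} integers $n$, so one should note that the Binet formulas~\eqref{bine.uvw} and the relations $\tau\sigma=q$ and $\tau+\sigma=p$ hold for every integer index, and that Lemma~\ref{main_lem} is itself stated for any integer $n$. This secures the full range of validity without a separate argument for negative subscripts, which is precisely why the direct substitution into Lemma~\ref{main_lem} is to be preferred over the theorem-based derivation.
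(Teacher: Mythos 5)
Your proposal is correct, and both of your derivations check out. The paper states this result as a corollary of the Horadam theorem \eqref{eq.eapqwai}, so its implicit proof is the specialization you describe second: $w_n=v_n$ (i.e., $a=2$, $b=p$), $r=1$, $t=0$, with the prefactor $w_t=v_0=2$ absorbing the extra factor of $2$ in the denominator $2^{j+1}$, followed by the clean-up via $v_{-m}=q^{-m}v_m$ and $v_{n+2}-qv_n=\Delta^2u_{n+1}$ (identity \eqref{eq.nutjauf} with $m=1$) --- exactly the bookkeeping you carry out, including the intermediate expression $\left(v_{n+2}-q^{n+1}v_{-n}-qv_n+q^{n+2}v_{-n-2}\right)/\Delta^2$, which is what \eqref{eq.eapqwai} yields since $u_1=1$. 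Your primary route is genuinely different: substituting $x=\tau$, $y=\sigma$ directly into Lemma~\ref{main_lem} bypasses the Horadam theorem entirely, and is the exact analogue of how the paper proves Theorem~\ref{thm1} (there with $x=\alpha^r$, $y=\beta^r$). What the direct route buys is immediacy and transparency: $xy=q$, $x+y=p$, $x-y=\Delta$ make all three members of the identity appear simultaneously, no negative-index symmetry or auxiliary product formula is needed, and validity for every integer $n$ is inherited at once from the lemma. What the paper's route buys is economy of presentation --- the statement is literally a specialization of the section's main theorem, which is why it appears as a corollary rather than as an independent result.
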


\begin{corollary}
If $r$, $n$ and $t$ are any integers, then
\begin{equation}
u_r \Delta^2 \mid w_{t + 1 + r(n + 1)} - q^{r(n + 1)} w_{t + 1 - r(n + 1)} - q\left( {w_{t - 1 + r(n + 1)} - q^{r(n + 1)} w_{t - 1 - r(n + 1)} } \right),
\end{equation}
provided both quantities are integers.
\end{corollary}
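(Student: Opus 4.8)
The plan is to read the divisibility off directly from the identity \eqref{eq.eapqwai} of the preceding theorem, which is the engine that has already done all the substantive work. Write $N$ for the numerator appearing there, namely
\[
N = w_{t + 1 + r(n + 1)} - q^{r(n + 1)} w_{t + 1 - r(n + 1)} - q\left( w_{t - 1 + r(n + 1)} - q^{r(n + 1)} w_{t - 1 - r(n + 1)} \right).
\]
The theorem asserts the exact equality $\sum_{j = 0}^n q^{rj} w_{r(n - 2j) + t} = N/(u_r \Delta^2)$. Thus the single rational number $N/(u_r\Delta^2)$ has been re-expressed as the finite sum on the left, and the divisibility claim is precisely the assertion that this rational number is an integer.

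First I would observe that the left-hand sum is a finite $\mathbb{Z}$-linear combination of the quantities $q^{rj}$ and $w_{r(n - 2j) + t}$ as $j$ runs from $0$ to $n$. The hypothesis ``provided both quantities are integers'' is exactly what guarantees that the sum and the divisor $u_r\Delta^2$ are integral; this is the role of the proviso, since for negative subscripts the powers $q^{rj}$ and the Horadam terms need not lie in $\mathbb{Z}$. Granting this, the sum is an integer and $u_r\Delta^2$ is an integer.

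Finally I would combine the two facts: the integer value $S$ of the sum equals $N/(u_r\Delta^2)$ by \eqref{eq.eapqwai}, whence $N = S\cdot u_r\Delta^2$ is a product of integers divisible by $u_r\Delta^2$, which is the stated conclusion $u_r\Delta^2 \mid N$. There is no genuine obstacle here beyond bookkeeping: the analytic content was discharged in proving the theorem itself (via the two substitutions $(x,y)=(\tau^r,\sigma^r)$ and $(\sigma^r,\tau^r)$ in \eqref{eq.ttxukxg} together with Lemma~\ref{lem.w65xm59}), and the corollary merely interprets that closed form arithmetically. The only point that demands care is making the integrality proviso precise, i.e.\ specifying exactly which choices of $p$, $q$, the initial data, and the subscripts keep both the displayed sum and $u_r\Delta^2$ inside $\mathbb{Z}$; once that is pinned down the conclusion is immediate.
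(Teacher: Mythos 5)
Your proof is correct and takes the same route as the paper, which states this corollary without a separate proof precisely because it is an immediate arithmetic reading of identity \eqref{eq.eapqwai}: the quotient of the displayed expression by $u_r\Delta^2$ equals the left-hand sum $\sum_{j=0}^n q^{rj}w_{r(n-2j)+t}$, and under the integrality proviso that sum is an integer, so the numerator is an integer multiple of $u_r\Delta^2$. Your added care about when the proviso actually forces integrality (negative subscripts, negative powers of $q$) is a fair sharpening of a point the paper leaves informal, not a deviation from its argument.
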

In particular, on account of~\eqref{eq.kb3hsvs} and~\eqref{eq.nutjauf}, we have
\begin{equation*}
u_r \mid u_{r(n + 1)} .
\end{equation*}
\begin{remark}
Doing the transformation $j\to n - j$ followed by $t\to t + rn$, we get an equivalent form of \eqref{eq.eapqwai} given by
\begin{equation}\label{eq.ci2o3yi}
\begin{split}
2\sum_{j = 0}^n {q^{r(n - j)} w_{2rj + t} } &=
\sum_{j = 0}^n {\left( {\frac{{v_r }}{2}} \right)^j \left( {w_{r(2n - j) + t}  + q^{r(n - j)} w_{rj + t} } \right)} \\
&= 2\frac{{w_{r(2n + 1) + t + 1} - qw_{r(2n + 1) + t - 1} - q^{r(n + 1)} \left( {w_{t - r + 1} - qw_{t - r - 1} } \right)}}{{u_r \Delta ^2 }}.
\end{split}
\end{equation}
\end{remark}

In the next theorem we present a generalization of Theorem~\ref{thm.ksm9y59}.

\begin{theorem}
Let $m$, $n$, $r$, $s$ and $t$ be any integers. Then
\begin{equation}
\begin{split}
&\sum_{j = 0}^n {( - 1)^j q^{(m - s)j} u_{r - s}^{n - j} u_{r - m}^j w_{(s - m)j + mn + t} } \\
&\qquad = \sum_{j = 0}^n {\frac{ u_{m - s}^j}{{2^{j + 1} }}\left( {u_{r - s}^{n - j} w_{(r - m)j + mn + t}  + ( - 1)^{n - j} q^{(m - s)(n - j)} u_{r - m}^{n - j} w_{s(n - j) + t + rj} } \right)}\\
&\qquad = \frac{{u_{r - s}^{n + 2} w_{mn + t}  + u_{r - s}^{n + 1} u_{r - m} w_{mn + m + t - s} }}{{u_{r - s}^2  + q^{m - s} u_{r - m}^2  + u_{r - s} u_{r - m} v_{m - s} }}\\
&\qquad\qquad + \frac{{( - 1)^n u_{r - m}^{n + 1} \left( {q^{(m - s)(n + 1) + m} u_{r - s} w_{sn + s + t - m}  + q^{(m - s)(n + 2) + s} u_{r - m} w_{sn + t} } \right)}}{{q^m u_{r - s}^2  + q^{2m - s} u_{r - m}^2  + q^m u_{r - s} u_{r - m} v_{m - s} }}.
\end{split}
\end{equation}
\end{theorem}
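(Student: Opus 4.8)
The plan is to mirror the proof of Theorem~\ref{thm.ksm9y59}, now using the Horadam form~\eqref{eq.ttxukxg} of Lemma~\ref{main_lem} together with the factorization supplied by Lemma~\ref{lem.dsfre22}. Concretely, I would apply~\eqref{eq.ttxukxg} twice, with
$$(x,y) = \bigl(-q^{m-s}\tau^s u_{r-m},\ \tau^m u_{r-s}\bigr)\quad\text{and}\quad (x',y') = \bigl(-q^{m-s}\sigma^s u_{r-m},\ \sigma^m u_{r-s}\bigr).$$
The point of this choice is that~\eqref{eq.c1ni4vp} and its $\sigma$-companion in Lemma~\ref{lem.dsfre22} give $x+y = \tau^r u_{m-s}$ and $x'+y' = \sigma^r u_{m-s}$, so that the middle member of~\eqref{eq.ttxukxg} produces precisely the factor $u_{m-s}^j/2^{j+1}$ appearing in the statement, while the powers $x^j y^{n-j}$ produce exactly the coefficients $(-1)^j q^{(m-s)j} u_{r-m}^j u_{r-s}^{n-j}$ carrying a $\tau$-power $\tau^{(s-m)j+mn}$ (respectively $\sigma^{(s-m)j+mn}$ in the primed instance).

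Next I would multiply the $\tau$-instance by $A\tau^t$ and the $\sigma$-instance by $B\sigma^t$ and add. Using the Binet formula $w_k = A\tau^k + B\sigma^k$ from~\eqref{bine.uvw} and the relation $\tau\sigma = q$, the two left-hand sides collapse into the first sum of the theorem and the two middle members collapse into the second; here one only has to check that the $\tau$- and $\sigma$-exponents attached to each $u$-monomial agree, so that $A\tau^\bullet + B\sigma^\bullet$ reassembles into a single Horadam term $w_\bullet$ with the stated index.

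For the closed form, I would combine the two copies of $\dfrac{x^{n+1}-y^{n+1}}{x-y}$ over the common denominator $(x-y)(x'-y')$. Expanding this product and repeatedly using $\tau\sigma = q$ and $v_{m-s} = \tau^{m-s}+\sigma^{m-s}$ yields
$$(x-y)(x'-y') = q^m\bigl(u_{r-s}^2 + q^{m-s}u_{r-m}^2 + u_{r-s}u_{r-m}v_{m-s}\bigr),$$
which is exactly $q^m D$ with $D$ the denominator displayed in the theorem. The numerator expands into four cross-products; grouping those whose $u$-monomial is $u_{r-s}^{n+2}$ or $u_{r-s}^{n+1}u_{r-m}$ separately from those whose monomial is $u_{r-m}^{n+2}$ or $u_{r-s}u_{r-m}^{n+1}$, and collecting $A\tau^\bullet + B\sigma^\bullet = w_\bullet$ inside each group, splits the single fraction with denominator $q^m D$ into the two fractions of the statement (the first group cancelling a factor $q^m$, the second retaining it).

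The main obstacle I anticipate is purely the bookkeeping in this last step: one must carry the $\tau$- and $\sigma$-exponents through the four cross-products without error, convert every occurrence of $\tau^a\sigma^b$ into $q^b\tau^{a-b}$, and verify that the surviving pure powers of $\tau$ and $\sigma$ are matched with the correct coefficients $A$ and $B$ so as to recombine into the exact Horadam indices $mn+t$, $mn+m+t-s$, $sn+s+t-m$, and $sn+t$. Everything else is a routine application of the Binet formulas and Lemma~\ref{lem.dsfre22}.
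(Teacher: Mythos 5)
Your proposal is correct and takes essentially the same route as the paper: the paper's proof sets $(x,y)=(\tau^m u_{r-s},\,-q^{m-s}\tau^s u_{r-m})$ and $(x,y)=(\sigma^m u_{r-s},\,-q^{m-s}\sigma^s u_{r-m})$ in \eqref{eq.ttxukxg} (your ordering of the pair is swapped, which is immaterial since all three members of \eqref{eq.ttxukxg} are symmetric in $x$ and $y$), multiplies the two instances by $\tau^t$ and $\sigma^t$, and combines them via the Binet formula and Lemma~\ref{lem.dsfre22}, exactly as you describe. Your additional details --- the identification $(x-y)(x'-y')=q^m\bigl(u_{r-s}^2+q^{m-s}u_{r-m}^2+u_{r-s}u_{r-m}v_{m-s}\bigr)$ and the regrouping of the four cross-terms into the two displayed fractions --- are precisely the bookkeeping the paper leaves implicit, and they check out.
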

\begin{proof}
Set $(x,y)=(\tau^mu_{r - s},-q^{m - s}\tau^su_{r - m})$ and $(x,y)=(\sigma^mu_{r - s},-q^{m - s}\sigma^su_{r - m})$, in turn, in~\eqref{eq.ttxukxg}. Multiply through the $\tau$ equation by $\tau^t$ and the $\sigma$ equation by $\sigma^t$. Use the Binet formula and Lemma~\ref{lem.dsfre22}.
\end{proof}

\begin{corollary}
Let $m$, $n$, $r$, $s$ and $t$ be integers such that $t$ is non-negative and \mbox{$r\ge m\ge s\ge 0$}. Let
\begin{equation*}
X=X(m,n,r,s,t):=q^m u_{r - s}^2  + q^{2m - s} u_{r - m}^2  + q^m u_{r - s} u_{r - m} v_{m - s}
\end{equation*}
and
\begin{equation*}
\begin{split}
Y=Y(m,n,r,s,t)&:=q^mu_{r - s}^{n + 2} w_{mn + t}  + q^mu_{r - s}^{n + 1} u_{r - m} w_{mn + m + t - s}\\
&\qquad + ( - 1)^n u_{r - m}^{n + 1} \left( {q^{(m - s)(n + 1) + m} u_{r - s} w_{sn + s + t - m}  + q^{(m - s)(n + 2) + s} u_{r - m} w_{sn + t} } \right).
\end{split}
\end{equation*}
Then
\begin{equation*}
X\mid Y,
\end{equation*}
provided that the Horadam sequence parameters $p$, $q$, $a$ and $b$ are integers.
\end{corollary}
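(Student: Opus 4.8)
The plan is to read the divisibility statement off directly from the preceding theorem, whose left-hand side
\[
S:=\sum_{j=0}^n (-1)^j q^{(m-s)j} u_{r-s}^{n-j} u_{r-m}^j w_{(s-m)j+mn+t}
\]
I will show is an integer, while the closed-form right-hand side rearranges into exactly $Y/X$. The key structural observation is that the two denominators in the theorem differ only by a power of $q$.

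First I would record that, writing $D_1=u_{r-s}^2+q^{m-s}u_{r-m}^2+u_{r-s}u_{r-m}v_{m-s}$ for the first denominator, the second denominator is precisely $X=q^m D_1$. Placing the two fractions of the closed form over the common denominator $X=q^m D_1$, the first fraction contributes $q^m$ times its numerator, namely $q^m u_{r-s}^{n+2}w_{mn+t}+q^m u_{r-s}^{n+1}u_{r-m}w_{mn+m+t-s}$, while the second fraction contributes its numerator unchanged. Summing these two numerators reproduces term for term the quantity $Y$ in the statement. Hence $S=Y/X$, equivalently $Y=X\,S$.

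The remaining, and only delicate, step is to verify $S\in\mathbb Z$ under the stated hypotheses. The constraints $r\ge m\ge s\ge 0$ force $r-s\ge 0$ and $r-m\ge 0$, so every factor $u_{r-s}^{\,n-j}$ and $u_{r-m}^{\,j}$ is a product of integers, since $u_k$ and $v_k$ with $k\ge 0$ are integer polynomials in $p,q$. Because $m-s\ge 0$ and $j\ge 0$, the power $q^{(m-s)j}$ is likewise an integer. Finally, the $w$-subscript $(s-m)j+mn+t$ is nonincreasing in $j$ (as $m-s\ge 0$) with minimum $sn+t\ge 0$ attained at $j=n$; thus every subscript of $w$ is non-negative, and with $a,b,p,q$ integers each $w_{(s-m)j+mn+t}$ is an integer. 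Therefore $S$ is a sum of integers.

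Combining, $Y=X\,S$ with $S\in\mathbb Z$ gives $X\mid Y$, as claimed. I expect the main obstacle to be bookkeeping rather than algebra: one must confirm that the inequality hypotheses are exactly what guarantee that every index entering the Lucas sequences $u,v$ and the Horadam sequence $w$ is non-negative, the binding constraint being the smallest $w$-subscript $sn+t$. This matters because for negative subscripts the sequence values become $q$-rational rather than integral, and the integrality of $S$ (and hence the divisibility conclusion) could otherwise fail.
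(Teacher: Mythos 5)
Your proposal is correct and is exactly the argument the paper intends: the paper states this corollary without proof as an immediate consequence of the preceding theorem, namely that the theorem's left-hand sum is an integer under the stated hypotheses (all $u$-, $v$- and $w$-subscripts and $q$-exponents non-negative, with $sn+t\ge 0$ the binding case), while its closed form combines over the common denominator $X=q^m D_1$ to give precisely $Y/X$. Your additional observation that $Y=XS$ with $S,X\in\mathbb{Z}$ also settles the integrality of $Y$ itself (where negative subscripts such as $sn+s+t-m$ could otherwise be a worry) is a nice touch, but the approach is the same.
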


The next set of results generalzes Theorem~\ref{thm.wtn85im}.
\begin{theorem}
If $n$, $r$ and $t$ are any integers, then
\begin{equation}\label{eq.hrnx7vm}
\begin{split}
\sum_{j = 0}^{2n} {( - 1)^j q^{rj} w_{2r(n - j) + t} }  &= \frac{{w_t }}{2}\sum_{j = 0}^n {\left( {\frac{{u_r^2 \Delta ^2 }}{4}} \right)^j v_{2r(n - j)} }  + \frac{{w_t }}{{u_r }}\sum_{j = 1}^n {\left( {\frac{{u_r^2 \Delta ^2 }}{4}} \right)^j u_{r(2n - 2j + 1)} } \\
& = \frac{{w_t v_{r(2n + 1)} }}{{v_r }}
\end{split}
\end{equation}
and
\begin{equation}\label{eq.za08w0z}
\begin{split}
\sum_{j = 0}^{2n - 1} {( - 1)^j q^{rj} w_{r(2n - 1 - 2j) + t} }  &= \frac{{w_{t + 1}  - qw_{t - 1} }}{2}\sum_{j = 0}^{n - 1} {\left( {\frac{{u_r^2 \Delta ^2 }}{4}} \right)^j u_{r(2n - 2j - 1)} } \\
&\qquad + \frac{{w_{t + 1}  - qw_{t - 1} }}{{u_r \Delta ^2 }}\sum_{j = 1}^n {\left( {\frac{{u_r^2 \Delta ^2 }}{4}} \right)^j v_{r(2n - 2j)} } \\
& = \frac{{w_{t + 2rn}  - q^{2rn} w_{t - 2rn} }}{{v_r }}.
\end{split}
\end{equation}
\end{theorem}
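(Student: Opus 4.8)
The plan is to imitate the proof of Theorem~\ref{thm.wtn85im}, but starting from the asymmetric form~\eqref{eq.ttxukxg} rather than from~\eqref{eq.g961w92}, and then projecting onto the Horadam sequence via the Binet form $w_m = A\tau^m + B\sigma^m$. First I would split the right-hand sum of~\eqref{eq.ttxukxg} (with $n$ replaced by $2n$, respectively $2n-1$) into its even- and odd-indexed parts, obtaining
\begin{equation*}
\sum_{j=0}^{2n} x^j y^{2n-j} = \sum_{j=0}^n \frac{(x+y)^{2j}}{2^{2j+1}}\bigl(x^{2n-2j}+y^{2n-2j}\bigr) + \sum_{j=1}^n \frac{(x+y)^{2j-1}}{2^{2j}}\bigl(x^{2n-2j+1}+y^{2n-2j+1}\bigr),
\end{equation*}
together with the companion degree-$(2n-1)$ identity, and recording the respective values $(x^{2n+1}-y^{2n+1})/(x-y)$ and $(x^{2n}-y^{2n})/(x-y)$. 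These are to the asymmetric form exactly what~\eqref{eq.jku4p1d} and~\eqref{eq.vgfrsd} are to~\eqref{eq.g961w92}.

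Second, I would substitute $(x,y)=(-\sigma^r,\tau^r)$ and then $(x,y)=(-\tau^r,\sigma^r)$. Since $\tau\sigma=q$, in both cases $xy=-q^r$, so $(xy)^j$ produces the factor $(-1)^jq^{rj}$ visible on the left of~\eqref{eq.hrnx7vm} and~\eqref{eq.za08w0z}; moreover $x+y=\pm\Delta u_r$ gives $\bigl((x+y)/2\bigr)^{2j}=(u_r^2\Delta^2/4)^j$, while even powers of $\tau,\sigma$ collapse to $v$'s and odd powers to $\Delta u$'s. To build the $w$-terms with the shift $t$ I would multiply the $(-\sigma^r,\tau^r)$-equation by $A\tau^t$ and the $(-\tau^r,\sigma^r)$-equation by $B\sigma^t$ and add; on the left this reassembles $A\tau^{2r(n-j)+t}+B\sigma^{2r(n-j)+t}=w_{2r(n-j)+t}$ term by term, reproducing the stated left-hand sides.

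The decisive point, and the step I expect to require the most care, is the behaviour of the right-hand pieces under the swap $\tau\leftrightarrow\sigma$ interchanging the two substitutions. For the even-degree identity~\eqref{eq.hrnx7vm} each summand on the right is \emph{invariant} (an even power of $x+y$ times a $v$, or an odd power times a $\Delta u$), and the value $v_{r(2n+1)}/v_r$ is invariant as well; hence the two substitutions give identical right-hand data and the weighted sum factors cleanly through $A\tau^t+B\sigma^t=w_t$, yielding $\tfrac{w_t}{2}\sum(u_r^2\Delta^2/4)^j v_{2r(n-j)}+\tfrac{w_t}{u_r}\sum(u_r^2\Delta^2/4)^j u_{r(2n-2j+1)}$ and value $w_t v_{r(2n+1)}/v_r$. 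For the odd-degree identity~\eqref{eq.za08w0z} every summand is instead \emph{anti}-invariant (an even power of $x+y$ times a $u$, or an odd power times a $v$), and the value $\Delta u_{2rn}/v_r$ is anti-invariant; the two contributions therefore appear with opposite signs and the combination factors through $A\tau^t-B\sigma^t=(w_{t+1}-qw_{t-1})/\Delta$, which is precisely~\eqref{eq.p951mmh} of Lemma~\ref{lem.w65xm59}. This is exactly where the quotient $w_{t+1}-qw_{t-1}$ and the extra $\Delta^2$ in the denominator of the second sum originate, so the bookkeeping of parities is the real content of the argument.

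Finally, to recognise the closed value of~\eqref{eq.za08w0z} I would invoke the one-line Binet consequence
\begin{equation*}
w_{t+2rn} - q^{2rn}w_{t-2rn} = u_{2rn}\bigl(w_{t+1}-qw_{t-1}\bigr),
\end{equation*}
which follows since $w_{r+s}-q^s w_{r-s}=(\tau^s-\sigma^s)(A\tau^r-B\sigma^r)=\Delta u_s\,(w_{r+1}-qw_{r-1})/\Delta$. It converts the combined value $(w_{t+1}-qw_{t-1})\,\Delta u_{2rn}/(\Delta v_r)$ into $(w_{t+2rn}-q^{2rn}w_{t-2rn})/v_r$, completing~\eqref{eq.za08w0z}; for~\eqref{eq.hrnx7vm} the value needs only $\tau^{r(2n+1)}+\sigma^{r(2n+1)}=v_{r(2n+1)}$ and $\tau^r+\sigma^r=v_r$. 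Thus the proof reduces to the two asymmetric splits, the paired substitution, and the invariant/anti-invariant dichotomy that selects the correct contraction formula.
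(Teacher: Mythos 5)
Your proof is correct and takes essentially the same route as the paper's (which is only sketched there): substitute the pair $(x,y)=(\tau^r,-\sigma^r)$ and $(\sigma^r,-\tau^r)$ --- yours differ only by the harmless swap $x\leftrightarrow y$ --- into the even/odd split of the asymmetric identity~\eqref{eq.ttxukxg}, weight by $A\tau^t$ and $B\sigma^t$, and combine via the Binet formula and Lemma~\ref{lem.w65xm59}. You merely make explicit what the paper omits (``we omit the details''), namely the invariant/anti-invariant parity bookkeeping and the contraction $w_{t+2rn}-q^{2rn}w_{t-2rn}=u_{2rn}\left(w_{t+1}-qw_{t-1}\right)$, and you implicitly correct the paper's slip of writing $\alpha,\beta$ for $\tau,\sigma$ in its proof sketch.
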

\begin{proof}
As the steps in the proofs are clear, we omit the details. Choose $(x,y)=(\alpha^r,-\beta^r)$ and $(x,y)=(\beta^r,-\alpha^r)$,
in turn, and multiply through by $\alpha^t$ and $\beta^t$, respectively. Combine, using the Binet formula and Lemma \ref{lem.w65xm59}.
The proof of~\eqref{eq.za08w0z} is similar.
\end{proof}

\begin{corollary}
Let $m$ be any positive odd integer and $n$ any positive even integer. Let $r$ and $t$ be any non-negative integers. Then
\begin{align*}
&v_r \mid v_{rm},\\
&v_r \mid w_{t + rn}  - q^{rn} w_{t - rn} ;
\end{align*}
provided that the Horadam sequence parameters $p$, $q$, $a$ and $b$ are integers. In particular, $v_r\mid u_{rn}$.
\end{corollary}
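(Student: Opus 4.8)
The plan is to read off both divisibility statements directly from the two closed-form evaluations in the preceding theorem, using a single principle: whenever $p,q,a,b\in\mathbb Z$, the leftmost sums in \eqref{eq.hrnx7vm} and \eqref{eq.za08w0z} are integers, while the far right-hand sides are precisely the target quantities divided by $v_r$. Since an integer that equals (quantity)$/v_r$ forces $v_r$ to divide that quantity, each identity yields a divisibility once integrality of its left-hand side is confirmed. So the proof is really two specializations of the theorem plus one integrality check.

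For the first claim, $v_r\mid v_{rm}$ with $m$ odd, I would write $m=2N+1$ and apply \eqref{eq.hrnx7vm} to the Lucas sequence of the first kind $w=u$ (so $w_t=u_t$), with $t=1$ and the running index taken to be $N$. The far right-hand side then collapses to $u_1 v_{r(2N+1)}/v_r=v_{rm}/v_r$, while the leftmost member is $\sum_{j=0}^{2N}(-1)^j q^{rj}u_{2r(N-j)+1}$, which is an integer. Equating the two shows $v_{rm}/v_r\in\mathbb Z$, i.e. $v_r\mid v_{rm}$.

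For the second claim, $v_r\mid w_{t+rn}-q^{rn}w_{t-rn}$ with $n$ even, I would set $n=2N$ and apply \eqref{eq.za08w0z} with running index $N$ to an arbitrary integer-parameter Horadam sequence. The far right-hand side is exactly $\bigl(w_{t+2rN}-q^{2rN}w_{t-2rN}\bigr)/v_r=\bigl(w_{t+rn}-q^{rn}w_{t-rn}\bigr)/v_r$, and the leftmost sum is again an integer, which gives the claim. The \emph{in particular} then follows by taking $w=u$ and invoking \eqref{eq.kb3hsvs} in the form $u_{t+rn}-q^{rn}u_{t-rn}=u_{rn}v_t$, so that $v_r\mid u_{rn}v_t$ for every $t$; choosing $t$ so that $v_t$ is prime to $v_r$ isolates $v_r\mid u_{rn}$.

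The one genuinely non-cosmetic point, and the step I expect to be the main obstacle, is the integrality of the leftmost sums. The subscripts $2r(N-j)+t$ in \eqref{eq.hrnx7vm} (respectively $r(2N-1-2j)+t$ in \eqref{eq.za08w0z}) become negative for $j>N$, and a negative-subscript Horadam term need not be an integer when $q\neq\pm1$, so no term-by-term argument works. I would resolve this by pairing the $j$-th and $(2N-j)$-th terms (respectively the $j$-th and $(2N-1-j)$-th): after applying the reflection formula for $w_{-k}$ together with the defining recurrence, the factor $q^{rj}$ from one term combines with the negative power of $q$ coming from the reflected partner to leave a \emph{non-negative} power of $q$ times an ordinary Horadam term. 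Because $r,t\ge 0$, every exponent that arises is non-negative, so each paired contribution is a genuine $\mathbb Z$-combination; in fact this pairing reproduces exactly the $v$-type and $u$-type entries appearing in the middle expressions of \eqref{eq.hrnx7vm} and \eqref{eq.za08w0z}, which makes the integrality transparent. Once this is in place, the remaining bookkeeping (the re-indexings $m=2N+1$ and $n=2N$, and the choices $w=u$, $t=1$) is routine.
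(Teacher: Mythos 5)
Your handling of the two displayed divisibilities is correct and is essentially the paper's (implicit) argument: with $p,q,a,b\in\mathbb Z$ the leftmost sums in \eqref{eq.hrnx7vm} and \eqref{eq.za08w0z} are integers, and the specializations $w=u$, $t=1$ (right-hand side $v_{rm}/v_r$) and arbitrary integer $w$ (right-hand side $(w_{t+rn}-q^{rn}w_{t-rn})/v_r$) give the claims. One misdiagnosis, though harmless: your assertion that ``no term-by-term argument works'' is false. By \eqref{eq.trp91mj}, equivalently $q^kw_{-k}=av_k-w_k$, a summand of \eqref{eq.hrnx7vm} with negative index, say $2r(N-j)+t=-k<0$, equals $(-1)^jq^{r(2N-j)+t}\left(av_k-w_k\right)$, and the exponent $r(2N-j)+t$ is non-negative for all $j\le 2N$; the same bookkeeping works for \eqref{eq.za08w0z}. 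So each summand is already an integer. Your pairing argument is also valid (the pairs collapse via $w_{t+s}+q^sw_{t-s}=v_sw_t$ and $w_{t+s}-q^sw_{t-s}=u_s\left(w_{t+1}-qw_{t-1}\right)$), so nothing breaks; you simply solved a problem that was easier than you thought.

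The genuine gap is the final step, $v_r\mid u_{rn}$. Taking $w=u$ you correctly obtain $v_r\mid u_{rn}v_t$ for every $t\ge 0$, but your plan to ``choose $t$ so that $v_t$ is prime to $v_r$'' can be impossible: if $p$ is even and $q$ is odd (e.g., the Pell parameters $p=2$, $q=-1$), then \emph{every} $v_t$ is even (induction on $v_{t+1}=pv_t-qv_{t-1}$ with $v_0=2$, $v_1=p$), while $v_r$ is also even, so $\gcd(v_t,v_r)\ge 2$ for all $t$. The most this route can yield is $v_r\mid u_{rn}\gcd_t(v_t)$, i.e., $v_r\mid 2u_{rn}$, which is short by a possible factor of $2$. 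A correct finish uses the \emph{first} claim instead: by \eqref{eq.kb3hsvs} with $m=n$ and $u_0=0$ one has $u_{2m}=u_mv_m$; writing the even number $n$ as $n=2^a m'$ with $a\ge 1$ and $m'$ odd and iterating gives $u_{rn}=u_{rm'}\,v_{rm'}v_{2rm'}\cdots v_{2^{a-1}rm'}$, and since $m'$ is odd the first divisibility gives $v_r\mid v_{rm'}$, hence $v_r\mid u_{rn}$. So the ``in particular'' statement follows from $v_r\mid v_{rm}$ together with the doubling identity, not from the coprimality device you propose.
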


\begin{corollary}
If $r$, $n$ and $t$ are any integers, then
\begin{equation}
\sum_{j = 0}^{2n} (- 1)^j q^{rj} u_{2r(n - j)} = 0,
\end{equation}
\begin{equation}
\sum_{j = 0}^{2n - 1} (- 1)^j q^{rj} v_{r(2n - 1 - 2j)} = 0,
\end{equation}
\begin{equation}
\begin{split}
\sum_{j = 0}^{2n - 1} (- 1)^j q^{rj} u_{r(2n - 1 - 2j) + t} &= \sum_{j = 0}^{n - 1} \left( {\frac{{u_r^2 \Delta ^2 }}{4}} \right)^j u_{r(2n - 2j - 1)} + \frac{2}{{u_r \Delta ^2 }}\sum_{j = 1}^n {\left( {\frac{{u_r^2 \Delta ^2 }}{4}} \right)^j v_{r(2n - 2j)} } \\
&= 2\frac{{u_{2rn} }}{{v_r }},
\end{split}
\end{equation}
and
\begin{equation}
\begin{split}
\sum_{j = 0}^{2n} (- 1)^j q^{rj} v_{2r(n - j)} &= \sum_{j = 0}^n {\left( {\frac{{u_r^2 \Delta ^2 }}{4}} \right)^j v_{2r(n - j)} }
+ \frac{2}{{u_r }}\sum_{j = 1}^n {\left( {\frac{{u_r^2 \Delta ^2 }}{4}} \right)^j u_{r(2n - 2j + 1)} } \\
&= \frac{{2v_{r(2n + 1)} }}{{v_r }}.
\end{split}
\end{equation}
\end{corollary}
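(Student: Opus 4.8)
The plan is to read off all four identities as specializations of the two master formulas \eqref{eq.hrnx7vm} and \eqref{eq.za08w0z}, taking for $w_n$ one of the two Lucas sequences $u_n=w_n(0,1;p,q)$ or $v_n=w_n(2,p;p,q)$ and setting the shift parameter to $t=0$ throughout. No new machinery is needed; the whole statement reduces to evaluating, at this initial data, the scalar prefactors that already appear in those two identities.

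First I would record the small table of special values that drives the argument. From the Binet formulas $u_n=(\tau^n-\sigma^n)/\Delta$ and $v_n=\tau^n+\sigma^n$ together with $\tau\sigma=q$ one has the reflection formulas $q^s u_{-s}=-u_s$ and $q^s v_{-s}=v_s$, and hence (equivalently, via \eqref{eq.kb3hsvs} and \eqref{eq.nutjauf}) the evaluations $u_{t+1}-qu_{t-1}=v_t$ and $v_{t+1}-qv_{t-1}=\Delta^2 u_t$, together with $u_{t+2rn}-q^{2rn}u_{t-2rn}=u_{2rn}v_t$ and $v_{t+2rn}-q^{2rn}v_{t-2rn}=\Delta^2 u_{2rn}u_t$. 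At $t=0$, using $u_0=0$ and $v_0=2$, these collapse to $2$, $0$, $2u_{2rn}$ and $0$ respectively; these are exactly the constants that will surface in the four lines.

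Then I would carry out the four substitutions in turn. Setting $w=u$, $t=0$ in \eqref{eq.hrnx7vm} makes the common prefactor $w_t=u_0$ vanish, which gives the first identity. Setting $w=v$, $t=0$ in \eqref{eq.hrnx7vm} turns the prefactor $w_t=v_0=2$ into the factors $1$ and $2/u_r$ on the two inner sums and produces the right-hand side $2v_{r(2n+1)}/v_r$; this is the fourth identity. For \eqref{eq.za08w0z}, taking $w=v$, $t=0$ forces $w_{t+1}-qw_{t-1}=\Delta^2 u_0=0$, annihilating both inner sums, while $w_{t+2rn}-q^{2rn}w_{t-2rn}=0$ kills the right-hand side, giving the second identity. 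Finally $w=u$, $t=0$ in \eqref{eq.za08w0z} produces the prefactors $\tfrac12(u_1-qu_{-1})=1$ and $(u_1-qu_{-1})/(u_r\Delta^2)=2/(u_r\Delta^2)$, while the right-hand numerator becomes $u_{2rn}v_0=2u_{2rn}$; this is the third identity.

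There is essentially no genuine obstacle: the entire content is the bookkeeping of matching each displayed line to the correct parent identity and sequence, and the verification of the four scalar prefactors via the reflection formulas $q^s u_{-s}=-u_s$ and $q^s v_{-s}=v_s$. The one point demanding care is the third identity, where one must recognize that the prefactor $u_{t+1}-qu_{t-1}=v_t$ equals $2$ precisely at $t=0$, so the displayed relation is the $t=0$ instance of \eqref{eq.za08w0z} rather than a genuinely $t$-dependent statement; the free $t$ shown in its summand is vestigial and is to be read as $0$, so that the summand is $u_{r(2n-1-2j)}$.
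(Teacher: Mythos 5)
Your proof is correct and is precisely the paper's (implicit) argument: the corollary follows by specializing \eqref{eq.hrnx7vm} and \eqref{eq.za08w0z} to $w_n=u_n$ and $w_n=v_n$ with $t=0$, using $u_0=0$, $v_0=2$ together with \eqref{eq.kb3hsvs} and \eqref{eq.nutjauf} to evaluate the prefactors $w_0$, $w_1-qw_{-1}$ and the right-hand numerators. Your remark on the third identity is also well taken: the ``$+t$'' in its left-hand summand must be read as vestigial (a typo), since for general $t$ that sum equals $u_{2rn}v_t/v_r$ rather than $2u_{2rn}/v_r$ --- e.g.\ in the Fibonacci case with $n=r=1$ the left side is $F_{t+1}+F_{t-1}=L_t$, which is $2$ only at $t=0$.
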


\end{document}